\newcommand{\numberset}[1]{\ensuremath{\mathbb{#1}}} 
\newcommand{\calm}[1]{\mathcal{#1}}
\newcommand{\N}{\numberset{N}} 
\newcommand{\R}{\numberset{R}} 
\newcommand{\Z}[1]{\numberset{Z}_{#1}} 
\newcommand{\Q}{\numberset{Q}} 
\newcommand{\C}{\numberset{C}} 
\newcommand{\Sp}[1]{\ensuremath{\mathbb{S}^{#1}}}
\newcommand{\LF}{L_{F}}
\newcommand{\F}{\calm{F}}
\newcommand{\Ho}[3]{\mathrm{H}_{#1}(#2, #3)}
\newcommand{\pr}[1]{#1^{-1}}
\newcommand{\ad}{a^2 \delta}
\newtheorem{thm}{Theorem}[section]
\newtheorem{prop}[thm]{Proposition}
\newtheorem{cor}[thm]{Corollary}
\theoremstyle{definition}
\newtheorem{rmk}[thm]{Remark}
\newtheorem{defin}[thm]{Definition}
\newcommand{\defi}[1]{\textbf{#1}}
\newcommand{\ie}{\textit{i.e.},\xspace}
\begin{document}
\title{Open book decompositions of $\Sp{5}$ and real singularities}
\author[H. Aguilar-Cabrera]{Hayd\'ee Aguilar-Cabrera}
\address{Department of Mathematics, Columbia University\\
MC 4406\\
2990 Broadway\\
New York, NY\\
10027}
\email{langeh@gmail.com}
\date{March 7, 2012}
\thanks{Research partially supported by CONACyT grant ``Estancias Posdoctorales y Sab\'aticas al Extranjero para la Consolidaci\'on de Grupos de Investigaci\'on''.}
\keywords{Real singularities, Milnor fibration, Seifert manifold, Open book decomposition}
\subjclass[2010]{Primary 32S55; Secondary 57M27}
\begin{abstract}
In this article, we study the topology of the family of real analytic germs $F \colon (\C^3,0) \to (\C,0)$ given by $F(x,y,z)=\overline{xy}(x^p+y^q)+z^r$ with $p,q,r \in \N$, $p,q,r \geq 2$ and $(p,q)=1$. Such a germ has isolated singularity at $0$ and gives rise to a Milnor fibration $\frac{F}{|F|} \colon \Sp{5} \setminus L_F \to \Sp{1}$. We describe the link $L_F$ as a Seifert manifold and we show that it is always homeomorphic to the link of a complex singularity. However, we prove that in almost all the cases the open-book decomposition of $\Sp{5}$ given by the Milnor fibration of $F$ cannot come from the Milnor fibration of a complex singularity in $\C^3$.
\end{abstract}

\maketitle

\section{Introduction}
The study of the topology and the geometry of isolated complex singularities is a topic which has generated great interest for many years. A main result is the Milnor Fibration Theorem (see \cite{milnor:singular}): Let $f\colon (\C^n,0) \to (\C,0)$ be a complex analytic germ with isolated singularity; let 
\begin{equation*}
L_f \colon = \pr{f}(0) \cap \Sp{2n-1}_{\varepsilon}
\end{equation*}
be the link of the singularity, where $\Sp{2n-1}_{\varepsilon}$ is a $(2n-1)$-sphere, centred at the origin, of radius $\varepsilon$ sufficiently small. Then the map
\begin{equation*}
\phi_f = \frac{f}{|f|} \colon \Sp{2n-1}_{\varepsilon} \setminus L_f \to \Sp{1}
\end{equation*}
is the projection of a $C^{\infty}$ locally trivial fibration.

Thus $\phi_f$ induces an open-book decomposition of the sphere $\Sp{2n-1}_{\varepsilon}$ with binding $L_f$ and whose pages are the fibres of $\phi_f$.

Furthermore, J.~Milnor shows that some real analytic germs also give rise to fibrations (\cite{milnor:singular}): Let $f \colon (U \subset \R^{n+k},0) \to (\R^k,0)$ be real analytic and a submersion on a punctured neighbourhood $U\setminus\{0\}$ of the origin in $\R^{n+k}$. 
Then there exists a $C^{\infty}$ locally trivial fibration 
\begin{equation*}
\varphi  \colon \Sp{n+k-1}_{\varepsilon} \setminus N(L_f) \to \Sp{k-1} \ .
\end{equation*}

However, the topology of isolated real singularities has not been extensively studied through this result mainly for the following two reasons:

\begin{enumerate}
\item The hypothesis of $f$ having an isolated critical point at the origin is very strong; \ie it is not easy to find examples of real analytic germs satisfying such condition. For example, when $k=2$, the set of critical points of $f$ is, in general, a curve.

\item Even when a function $f$ satisfies this condition, in general it is not true that the projection $\varphi$ can be given by $\frac{f}{||f||}$ as in the complex case.
\end{enumerate}

Nevertheless, now there is an important collection of works giving examples of such real germs and, moreover, studying the geometry and the topology of the fibration $\varphi$. See for example \cite{loo:k2,MR0365592,MR1900787,PichSea:barfg,MR2115674,Cis09,Oka08,MR2647448}.

Some recent results in the area allow us to find examples for which it is possible to give a beautiful and explicit description of the geometry and topology and at the same time to find that the corresponding behaviours are not present in complex singularities. 

For example, consider the family of real analytic germs $F \colon (\C^3,0) \cong (\R^6,0) \to (\C,0) \cong (\R^2,0)$ defined by
\begin{equation}\label{F23}
F(x,y,z)= \overline{xy}(x^2+y^3) + z^r \ \text{with} \ r>2 \ .
\end{equation}
By \cite{Cis09}, $F$ has a Milnor fibration with projection $\phi_F = \frac{F}{|F|}$. In \cite{MR2922705} we prove that the link $\LF$ is a Seifert manifold and that there is not a complex analytic germ $G \colon (\C^3,0) \to (\C,0)$ with isolated singularity at the origin such that the link $L_G$ is homeomorphic to the link $L_F$. On the other hand, given $F \colon (\C^3,0) \cong (\R^6,0) \to (\C,0) \cong (\R^2,0)$ defined by
\begin{equation}\label{F2q}
F(x,y,z)= \overline{xy}(x^2+y^q) + z^2 \ \text{with} \ q>2 \ ,
\end{equation}
in \cite{MR2922705} we have proved that the Milnor fibre $\mathcal{F}$ of $F$ is not the smoothing of a normal Gorenstein complex surface singularity $(X,p)$.

In this paper we study the family of real germs $F \colon (\C^3,0) \to (\C,0)$ given by $F(x,y,z)=\overline{xy}(x^p+y^q)+z^r$ with $p,q,r \in \N$, $p,q,r \geq 2$ and $\gcd(p,q)=1$. What it is done here can be seen as a generalisation of \cite{MR2922705}: We explicitly describe $L_F$ as a Seifert manifold and show that $L_F$ is homeomorphic to the link of a normal complex surface. From there, using plumbing graphs and maximal splice diagrams we compute the canonical class $K$ of a resolution $\widetilde{V}$. We classify the pairs $(p,q)$ such that the coefficients of $K$ are not integers and then $L_F$ cannot be realised as the link of a singularity in $\C^3$, and the pairs $(p,q)$ such that the coefficients of $K$ are integers. In such cases we don't have a complete answer but we show pairs $(p,q)$ where the Milnor fibre of $F$ is not diffeomorphic to any smoothing of a complex singularity in $\C^3$.

The improvement over the techniques used in \cite{MR2922705} is mainly the use of splice diagrams. This tool was developed in \cite{EN85} by Eisenbud and Neumann and greatly used in \cite{MR2140991,MR2140992} by Neumann and Wahl. Through the splice diagrams we have been able to compute the coefficients of the canonical class from the Seifert invariants and the plumbing graph of $L_F$. In  general, the computation of the coefficients of $K$ is done from the resolution graph and the adjunction formula, which gives a system of equations. However, this system can be very big and the increase on its size has a direct relation with the extension in continued fractions of the fraction $\alpha_i/(\alpha_i-\beta_i)$ where ($\alpha_i,\beta_i)$ is a Seifert pair of $L_F$, but with the splice diagrams we do not need to solve the system of equations and easier computations can be done.

\section{The family $F$ of real singularities}

Let $F \colon (\C^3,0) \to (\C,0)$ be given by 
\begin{equation*}
F(x,y,z)=\overline{xy}(x^p+y^q)+z^r
\end{equation*}
with $p,q,r \in \Z{}^{+}$, $p,q,r \geq 2$ and $\gcd(p,q)=1$. By \cite[Corollary~1]{MR2922705}, the function $F$ has isolated singularity.
%

We denote by $L_F=\Sp{5} \cap \pr{F}(0)$ the link of the isolated singularity at the origin of $F$.

\subsection{The function $F$ as a polar weighted homogeneous polynomial}

In this section we will see that the link $L_F$ is a Seifert manifold and that $F$ has Milnor fibration with projection $\displaystyle\frac{F}{|F|}$. For this we will see first that $F$ is a polar weighted homogeneous polynomial.

Polar weighted homogeneous polynomials were introduced by Cisneros-Molina in \cite{Cis09} following ideas presented in \cite{MR1900787} by Ruas, Seade and Verjovsky and studied by Oka in \cite{Oka08} and \cite{oka-2009}.

Let $(p_1, \ldots, p_n)$ and $(u_1, \ldots, u_n)$ in $({\Z{}^+})^n$ be such that $\gcd(p_1, \ldots, p_n)=1$ and $\gcd(u_1, \ldots, u_n)=1$, we consider the action of $\R^+ \times \Sp{1}$ on $\C^n$ defined by:
\begin{equation*}
(t, \lambda) \cdot (z) = (t^{p_1} \lambda^{u_1}z_1, \ldots, t^{p_n}\lambda^{u_n}z_n) \ ,
\end{equation*}
where $t \in \R^+$ and $\lambda \in \Sp{1}$. 

\begin{defin}
Let $f \colon \C^n \to \C$ be a function defined as a polynomial function in the variables $z_i, \bar{z_i}$:
\begin{equation*}
f(z_1, \ldots, z_n) = \sum_{\mu, \nu} c_{\mu, \nu} z^{\mu} \bar{z}^{\nu} \ ,
\end{equation*}
where $\mu= (\mu_1, \ldots, \mu_n)$, $\nu=(\nu_1, \ldots, \nu_n)$, with $\mu_i, \nu_i$ not negative integers and $z^{\mu}=z_1^{\mu_1} \cdots z_n^{\mu_n}$ (same for $\bar{z}$).

The function $f$ is called a \defi{polar weighted homogeneous polynomial} if there exists $(p_1, \ldots, p_n)$ and $(u_1, \ldots, u_n)$ in $({\Z{}^+})^n$ and $a, c \in \Z{}^+$ such that the action defined above satisfies the functional equality:
\begin{equation*}
f((t, \lambda) \cdot (z)) = t^a \lambda^c (f(z)) \ ,
\end{equation*}
where $z=(z_1, \ldots, z_n)$.
\end{defin}

It is known that any polar weighted homogeneous polynomial has an isolated critical value at the origin (see \cite[Proposition~3.2]{Cis09} and \cite[Proposition~2]{Oka08}).

By \cite[Proposition~3]{MR2922705}, the polynomial $f\overline{g} \colon \C^2 \equiv \R^4 \to \C \equiv \R^2$ given by $f(x,y)\overline{g(x,y)}=\overline{xy}(x^p+y^q)$ and the function $F$ are polar weighted homogeneous polynomials. Thus, by \cite[Proposition~3.4]{Cis09}, the function $F$ has Milnor fibration with projection $\phi_F= \frac{F}{|F|}$ since $F$ is a polar weighted homogeneous polynomial.

\subsection{The link $L_F$ as a Seifert manifold}

A \textbf{Seifert manifold} is a closed connected 3-manifold $M$ endowed with a fixed-point free action of $\Sp{1}$; \ie for all $x \in M$, there exist $\lambda \in \Sp{1}$ such that
\begin{equation*}
\lambda \cdot x \neq x \ .
\end{equation*}
Thus, there is an associated fibration $\pi \colon M \rightarrow B$ where $B$  is the orbit surface of the $\Sp{1}$-action.

Let us recall that a Seifert manifold is classified up to isomorphism by its \textbf{Seifert invariants}
\begin{equation*}
\Bigl(g; \, e_0; \, (\alpha_1, \beta_1), \ldots, (\alpha_s, \beta_s)\Bigr)
\end{equation*}
where $g \geq 0$ is the genus of the surface $B$, $(\alpha_i, \beta_i)$ are the exceptional orbits of the action (and $0 <\beta_i<\alpha_i$) and $e_0=e-\sum \beta_i / \alpha_i$ is the rational Euler number of the fibration. The integer $e$ is the Euler class of the $\Sp{1}$-fibration obtained by removing all the exceptional orbits.

By \cite[Proposition~2]{MR2922705}, the link $\LF$ is a Seifert manifold and the next result gives the Seifert invariants of $\LF$:

\begin{thm}[{\cite[Theorem~2]{MR2922705}}]\label{thprinc}
Let $(p,q)$ be coprime integers and $r \in \Z{}^{+}$ with $p,q,r \geq 2$. Let $F \colon \C^3 \to \C$ be the function defined by $F(x,y,z)= \overline{xy}(x^p+y^q)+z^r$. Set $\delta=\gcd(r,pq-p-q)$, $a=r/\delta$ and $b=(pq-p-q)/\delta$. The Seifert invariants of the link $L_F$ are
\begin{equation*}
\Bigl(\frac{\delta-1}{2}; \ - \frac{\delta}{apq}; \ (aq, \beta_1),\,(ap, \beta_2),\,(a, \beta_3)\Bigr) \ ,
\end{equation*}
where
\begin{align*}
b \beta_1 & \equiv -1 \pmod{aq} \ ,\\
b \beta_2 & \equiv -1 \pmod{ap} \ ,\\
b \beta_3 & \equiv 1 \pmod{a} \ .
\end{align*}
\end{thm}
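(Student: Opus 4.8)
The plan is to work entirely with the circle action coming from the polar weighted homogeneous structure of $F$. By the facts recalled above, $F$ is a polar weighted homogeneous polynomial, so its $\Sp{1}$-action restricts to a fixed-point-free action on $L_F=\Sp{5}\cap F^{-1}(0)$, and $L_F$ is therefore a Seifert manifold (the proposition cited from \cite{MR2922705}). What remains is to extract the five invariants directly from the action: the genus $g$ of the orbit surface, the three exceptional pairs $(\alpha_i,\beta_i)$, and the rational Euler number $e_0$.

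First I would make the weights explicit. Imposing $F((t,\lambda)\cdot(x,y,z))=t^{a}\lambda^{c}F(x,y,z)$ forces the $x$- and $y$-weights to be proportional to $q$ and $p$ respectively (so that $x^{p}$ and $y^{q}$ scale in the same way), after which the $z$-weight is determined by $r$; requiring the resulting $\Sp{1}$-weights to be coprime is exactly where $\delta=\gcd(r,pq-p-q)$, $a=r/\delta$ and $b=(pq-p-q)/\delta$ enter, producing primitive polar weights $(aq,ap,b)$ on $(x,y,z)$ and total polar weight $c=a(pq-p-q)$. It is worth recording here that $pq-p-q$ is always odd, because $p$ and $q$ cannot both be even, which is precisely what makes $(\delta-1)/2$ an integer.

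Next I would locate the exceptional orbits. A point of $L_F$ has non-trivial $\Sp{1}$-stabiliser only if one of its coordinates vanishes, and a short inspection of the coordinate strata leaves exactly three candidates: the $x$-axis $\{y=z=0\}$, which lies in $L_F$ and is a single orbit with stabiliser $\Z{aq}$; the $y$-axis $\{x=z=0\}$, a single orbit of $L_F$ with stabiliser $\Z{ap}$; the $z$-axis $\{x=y=0\}$, which does not meet $L_F$; and the circle $\{x^{p}+y^{q}=0,\ z=0\}\cap\Sp{5}$, i.e. the $(p,q)$-torus knot, which by $\gcd(p,q)=1$ is irreducible and forms a single orbit with stabiliser $\Z{a}$. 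Hence $\alpha_1=aq$, $\alpha_2=ap$ and $\alpha_3=a$. The multiplicities $\beta_i$ are local rotation numbers: near each exceptional orbit, solving $F=0$ for the ``fast'' coordinate exhibits a fibred solid torus whose meridian disc is a graph over the $z$-disc, on which the local stabiliser $\Z{\alpha_i}$ acts through the weight $b$; comparison with the standard Seifert normal form then turns this into congruences of the shape $b\beta_i\equiv\pm1\pmod{\alpha_i}$, the sign being $-$ at the two coordinate-axis orbits and $+$ at the torus-knot orbit — the change of sign reflecting the orientation of the torus knot relative to a regular fibre, equivalently the weight $apq$ of the transverse coordinate $x^{p}+y^{q}$.

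Finally I would compute the base genus and $e_0$. For the genus I would identify the orbit surface $B=L_F/\Sp{1}\cong(F^{-1}(0)\setminus\{0\})/\C^{*}$ and use the projection to the $(x,y)$-coordinates, which presents $L_F$ as an $r$-fold cyclic cover of the orbit space of $\fbg$ on $\Sp{3}$ — equivalently, $L_F$ is the $r$-fold cyclic suspension of the Milnor fibration of $\fbg$, whose monodromy has order $pq-p-q$ — so that the relevant Euler-characteristic count brings in $\delta=\gcd(r,pq-p-q)$ through the eigenvalue (component) structure of that monodromy and yields $g=(\delta-1)/2$. The rational Euler number $e_0$ is then read off from the standard formula for the Seifert fibration of a polar weighted homogeneous link in terms of its weights — equivalently, by transversality, from the intersection number of a Milnor page with a regular orbit — giving $e_0=-\delta^{2}/(pqr)=-\delta/(apq)$, after which one checks that $e=e_0+\sum_i\beta_i/\alpha_i$ is an integer. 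I expect the real difficulty to lie in this last part: making the identification of $B$ (and the role of $\delta$ as the gcd of $r$ with the order of the monodromy of $\fbg$) completely rigorous, and carrying out the orientation bookkeeping that fixes the signs in the $\beta_i$-congruences and the exact value of $e_0$; by contrast, locating the exceptional orbits and computing the $\alpha_i$ is comparatively routine.
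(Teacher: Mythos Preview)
The present paper does not prove this statement: Theorem~\ref{thprinc} is quoted, without proof, from the author's earlier article \cite{MR2922705} and is used here only as input for the computations in later sections. There is therefore no ``paper's own proof'' to compare against.

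That said, your outline is the expected one and is essentially correct. The primitive polar weights $(aq,ap,b)$ on $(x,y,z)$ with total polar degree $a(pq-p-q)=ab\delta$ are right; your enumeration of the isotropy strata is complete and gives the correct $\alpha_i$; and you have correctly identified where the real work lies, namely the orientation bookkeeping for the $\beta_i$ (standardly handled via the Orlik--Wagreich/Neumann formulae for weighted homogeneous links, adapted to the polar setting) and the determination of $g$ and $e_0$. For the last two, the cleanest route---and presumably close to what is done in \cite{MR2922705}---is exactly the one you sketch: realise $L_F$ as the $r$-fold cyclic suspension (equivalently, cyclic branched cover) over the link of $\fbg=\overline{xy}(x^p+y^q)$, so that the base orbifold is an $r$-fold orbifold cover of the base for $\fbg$, and read off $g=(\delta-1)/2$ and $e_0=-\delta/(apq)$ from the covering data, with $\delta=\gcd(r,pq-p-q)$ appearing as the order of the relevant monodromy contribution. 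Your remark that $pq-p-q$ is always odd (so $(\delta-1)/2\in\Z{}$) is a useful sanity check.
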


Because of the sign of the rational Euler class, by \cite[Theorem~6]{Neu:calcplumb} we have that there exists a normal complex surface singularity $(V_F,0)$ whose link is homeomorphic to the link $L_F$.

By \cite[Theorem~5.1]{NeuRay:plumb} there exists a plumbing graph $\Gamma_F$ such that $L_F \cong \partial P(\Gamma_F)$, where $P(\Gamma_F)$ is the four-manifold obtained by plumbing $2$-discs bundles according to $\Gamma_F$ (see Figure~\ref{plgrgen}) and
\begin{equation}\label{faccont}
-e= e_0 + \sum_{i=1}^{3} \frac{\beta_i}{\alpha_i}-3 \ , \quad \frac{\alpha_i}{\alpha_i - \beta_i}=[e_{i,1}, \ldots, e_{i,s_i}], \ \text{for} \ i=1,2,3 \ .
\end{equation}
Where, as in \cite{NeuRay:plumb},
\begin{equation*}
[b_1, \ldots, b_k] = b_1 - \cfrac{1}{b_2 - \cfrac{1}{\ddots \cfrac{}{-\cfrac{1}{b_k}}}}
\end{equation*}
denotes the finite continued fraction with $b_1, \ldots, b_k \in \Z{}$.

\begin{figure}[H]
\begin{center}
\begin{tikzpicture}[xscale=.95,yscale=.85]
\filldraw[black, thin] (0,0) circle (1.9pt) node[distance=.5pt and 2pt,below left,scale=1.3] {$-e$}
              -- ++(40:70pt) circle (1.9pt) node [above,scale=1.3] {$-e_{1,1}$}
              -- ++(70pt,0pt) circle (1.9pt) node [above=.5pt,scale=1.3] {$-e_{1,2}$}
            -- ++(20pt,0pt)
               ++(20pt,0pt) circle (.1pt)
               ++(3pt,0pt) circle (.1pt)
               ++(3pt,0pt) circle (.1pt)
               ++(20pt,0pt)
            -- ++(20pt,0pt) circle (1.9pt) node [above=.5pt,scale=1.3] {$-e_{1,s_1}$}
        (0,0) -- (54pt,0pt) circle (1.9pt) node [above,scale=1.3] {$-e_{2,1}$}
              -- ++(70pt,0pt) circle (1.9pt) node [above=.5pt,scale=1.3] {$-e_{2,2}$}
            -- ++(20pt,0pt)
               ++(20pt,0pt) circle (.1pt)
               ++(3pt,0pt) circle (.1pt)
               ++(3pt,0pt) circle (.1pt)
               ++(20pt,0pt)
            -- ++(20pt,0pt) circle (1.9pt) node [above=.5pt,scale=1.3] {$-e_{2,s_2}$}
        (0,0) -- (-40:70pt) circle (1.9pt) node [below=.5pt,scale=1.3] {$-e_{3,1}$}
            -- ++(70pt,0pt) circle (1.9pt) node [below=.5pt,scale=1.3] {$-e_{3,2}$}
            -- ++(20pt,0pt)
               ++(20pt,0pt) circle (.1pt)
               ++(3pt,0pt) circle (.1pt)
               ++(3pt,0pt) circle (.1pt)
               ++(20pt,0pt)
            -- ++(20pt,0pt) circle (1.9pt) node [below=.5pt,scale=1.3] {$-e_{3,s_3}$}
;
\end{tikzpicture}
\end{center}
\caption{Plumbing graph $\Gamma_F$.}
\label{plgrgen}
\end{figure}
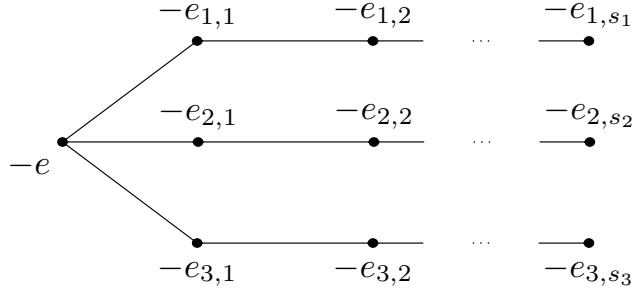

\section{Plumbing graphs and Splice diagrams}

First we recall the concept of a splice and a splice diagram (see \cite[Appendix~1]{MR2140991} and \cite[Section~1]{MR2140992}).

Let $\Sigma_1$ and $\Sigma_2$ be rational homology $3$-spheres and let $K_i \subset \Sigma_i$ (with $i=1,2$) be a knot. Let $N_i$ be a closed tubular neighbourhood of $K_i$ in $\Sigma_i$ for $i = 1, 2$ and let $\Sigma'_i$ be the result of removing its interior, so $\partial \Sigma′_i = T^2$. The \textbf{splice} of $\Sigma_1$ to $\Sigma_2$ along $K_1$ and $K_2$ is the manifold $\Sigma = \Sigma′_1 \cup_{T^2} \Sigma′_2$ where the gluing matches the meridian in $\partial\Sigma_1$ to the longitude in $\partial\Sigma_2$ and vice versa. We denote the splice by
\begin{center}
\begin{tikzpicture}
\draw (-8pt,0pt) node[scale=1.3] {$\Sigma$}
      (5pt,0pt) node[scale=1.3] {$=$}
      (15pt,0pt) node[scale=3] {$($}
      (33pt,0pt) node[left=-5pt,scale=1.3] {$\Sigma_1$}
                 node[above right,scale=1.2] {$K_1$}
   -- (113pt,0pt) node[right=-2pt,scale=1.3] {$\Sigma_2$}
                  node[above left,scale=1.2] {$K_2$}
      (133pt,0pt) node[scale=3] {$)$}
;
\end{tikzpicture}
\end{center}

\begin{defin}
A \textbf{splice diagram} $\Delta$ is a finite tree with no valence $2$ vertices decorated with some integers. The vertices of valence $\geq 3$ are called \textbf{nodes} and the vertices of valence $1$ are called \textbf{leaves}. The tree $\Delta$ is decorated with integer weights as follows: for each node $v$ and edge $e$ incident at $v$, the end of $e$ at $v$ is decorated with an integer $w_{v,e}$. Thus, an edge joining two nodes has weights associated to each end, while an edge from a node to a leaf has just one weight at the node end (see Figure~\ref{wespl}).
\end{defin}

\begin{figure}[H]
\begin{center}
\begin{tikzpicture}[xscale=.78,yscale=.7]
\begin{scope}
\filldraw (-150:50pt) -- (0pt,0pt) circle (1.9pt) node[distance=.5pt and 2pt,below right,scale=1.3] {$w_{v,e}$} -- (80pt,0pt) circle (1.9pt);
\draw (-210:50pt) -- (0pt,0pt);
\end{scope}
\begin{scope}[xshift=200pt]
\filldraw (-150:50pt) -- (0pt,0pt) circle (1.9pt) node[distance=.5pt and 2pt,below right,scale=1.3] {$w_{v,e}$} -- (100pt,0pt) circle (1.9pt) node[distance=.5pt and 2pt,below left,scale=1.3] {$w_{v',e}$};
\draw (-210:50pt) -- (0pt,0pt);
\draw[xshift=100pt] (0pt,0pt) -- (30:50pt)
                   (0pt,0pt) -- (-30:50pt);
\end{scope}
\end{tikzpicture}
\end{center}
\caption{Weights on the splice diagram $\Delta$.}
\label{wespl}
\end{figure}
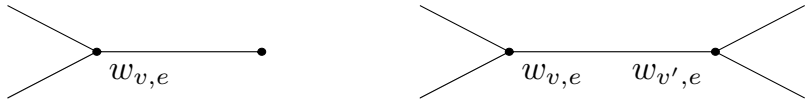

The weight $w_{v,e}$ is computed as follows: Let $v$ be a vertex of $\Delta$ and let $e$ be an edge of $\Delta$ such that $v$ is at one extreme of $e$. Let $\Delta_{v,e}$ be the connected subgraph of $\Delta$ that remains if we eliminate $e$ and the subgraph of $\Delta$ that was connected to $v$; \ie $\Delta_{v,e}$ is the subgraph of $\Delta$ cut off from $v$ (see Figure~\ref{subdelta}). The weight $w_{v,e}$ is the absolute value of the determinant of the intersection matrix for the subgraph $\Delta_{v,e}$.

\begin{figure}[H]
\begin{center}
\begin{tikzpicture}[xscale=.7, yscale=.7]
\begin{scope}
\begin{scope}
\filldraw (-150:50pt) -- (0pt,0pt) circle (1.9pt) node[distance=.5pt and 2pt,below right,scale=1.3] {$w_{v,e}$} node[above=2pt,scale=1.3] {$v$} -- (80pt,0pt) circle (1.9pt);
\draw (-210:50pt) -- (0pt,0pt);
\end{scope}
\draw[yshift=10pt] 
(50pt, -55pt) -- (50pt,-75pt) --(47pt,-70pt) (50pt,-75pt) --(53pt,-70pt);
\begin{scope}[yshift=-110pt]
\filldraw (-150:50pt) --(0pt,0pt) circle (1.9pt) node[above=2pt,scale=1.3] {$v$} (80pt,0pt) circle (1.9pt) node[above=2pt] {$\Delta_{v,e}$};
\draw (-210:50pt) -- (0pt,0pt);
\end{scope}
\end{scope}
\begin{scope}[xshift=250pt]
\begin{scope}
\filldraw (-150:50pt) -- (0pt,0pt) circle (1.9pt) node[distance=.5pt and 2pt,below right,scale=1.3] {$w_{v,e}$} node[above=2pt,scale=1.3] {$v$} -- (100pt,0pt) circle (1.9pt) node[distance=.5pt and 2pt,below left,scale=1.3] {$w_{v',e}$} node[above=2pt,scale=1.3] {$v'$};
\draw (-210:50pt) -- (0pt,0pt);
\draw[xshift=100pt] (0pt,0pt) -- (30:50pt)
                   (0pt,0pt) -- (-30:50pt);
\end{scope}
\draw[yshift=10pt] (50pt, -55pt) -- (50pt,-75pt) --(47pt,-70pt) (50pt,-75pt) --(53pt,-70pt);
\begin{scope}[yshift=-110pt]
\filldraw (-150:50pt) -- (0pt,0pt) circle (1.9pt) node[below=2pt,scale=1.3] {$v$};
\filldraw (100pt,0pt) circle (1.9pt) node[below=2pt,scale=1.3] {$v'$};
\draw (-210:50pt) -- (0pt,0pt);
\draw[xshift=100pt] (0pt,0pt) -- (30:50pt)
                   (0pt,0pt) -- (-30:50pt);
\draw (-10pt,30pt) node {$\Delta_{v',e}$};
\draw (110pt,30pt) node {$\Delta_{v,e}$};
\end{scope}
\end{scope}
\end{tikzpicture}
\end{center}
\caption{Obtaining the subgraph $\Delta_{v,e}$.}
\label{subdelta}
\end{figure}
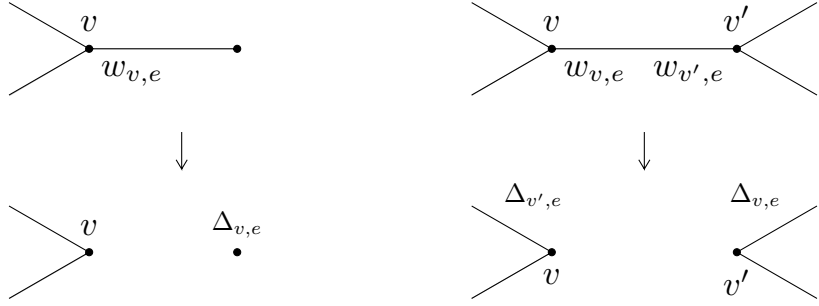

\begin{defin}
The \textbf{edge determinant} $\det(e)$ of an edge $e$ joining two nodes is the product of the two weights on the edge minus the product of the weights adjacent to the edge.
\end{defin}
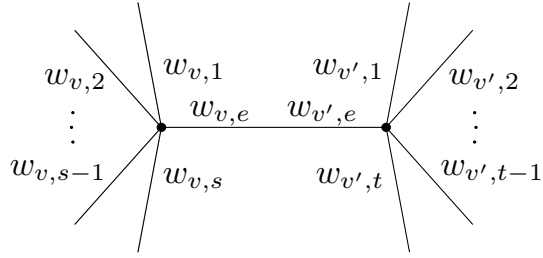
\begin{figure}[H]
\begin{center}
\begin{tikzpicture}[xscale=.85, yscale=.8]
\filldraw (100:60pt) -- (0pt,0pt) circle (1.9pt) (130:60pt) -- (0pt,0pt) (230:60pt) -- (0pt,0pt) (260:60pt) -- (0pt,0pt)  (0pt,0pt)-- ++(100pt,0pt) circle (1.9pt);
\draw (0pt,0pt) node[above right=-4pt and 6pt, scale=1.3] {$w_{v,e}$}
                node[above right=12pt and -4pt, scale=1.3] {$w_{v,1}$}
                node[above left=8pt and 16pt, scale=1.3] {$w_{v,2}$}
                node[below left=8pt and 16pt, scale=1.3] {$w_{v,s-1}$}
                node[below right=11pt and -4pt, scale=1.3] {$w_{v,s}$};
\filldraw (-40pt,0pt) circle (.5pt)
          (170:40pt) circle (.5pt)
          (190:40pt) circle (.5pt); 
\draw[xshift=100pt] (80:60pt) --( 0pt,0pt) (50:60pt) --( 0pt,0pt) (-50:60pt) --( 0pt,0pt) (-80:60pt) --( 0pt,0pt);
\draw (100pt,0pt) node[above left=-4pt and 6pt, scale=1.3] {$w_{v',e}$}
                node[above left=12pt and -4pt, scale=1.3] {$w_{v',1}$}
                node[above right=8pt and 19pt, scale=1.3] {$w_{v',2}$}
                node[below right=8pt and 16pt, scale=1.3] {$w_{v',t-1}$}
                node[below left=11pt and -4pt, scale=1.3] {$w_{v',t}$};
\filldraw[xshift=100pt] (40pt,0pt) circle (.5pt)
          (10:40pt) circle (.5pt)
          (-10:40pt) circle (.5pt);
\end{tikzpicture}
\end{center}
\caption{Edge $e$ joining nodes $v$ and $v'$.}
\label{detedge}
\end{figure}
In Figure~\ref{detedge}, the determinant edge of the edge $e$ that joins nodes $v$ and $v'$ is given by
\begin{equation*}
\det(e)= (w_{v,e})(w_{v',e})-(w_{v,1})(w_{v,s})\cdots(w_{v',1})(w_{v',t}) \ .
\end{equation*}

By \cite[Proposition~7.3]{EN85}, the splice diagrams that arise in the study of links of complex singularities always satisfy the following conditions:
\begin{itemize}
\item All weights are positive.
\item All edge determinants are positive.
\end{itemize}

In this work we will need a variant of splice diagrams where valency $2$ vertices are allowed, and weights are also associated to the leaf end of an edge ending in a leaf.

\begin{defin}
A \textbf{maximal splice diagram} is a splice diagram where vertices of valency 2 are allowed and there are included edge weights at all vertices; \ie the maximal splice diagram can contain bamboos (chains of valence 2 vertices) and it has weights on the leaves.
\end{defin}

The way to compute the weight $w_{v,e}$ is to take the numerator of the generalized continued fraction on the graph $\Delta_{v,e}$. Let us give an example using that $\Gamma_F$ can be seen as a maximal splice diagram $\Delta_F$ with weights computed from the intersection numbers on $\Gamma_F$.
\begin{figure}[H]
\begin{center}
\begin{tikzpicture}[xscale=.85, yscale=.7]
\filldraw[black, thin]
           (-42pt,0pt) node[scale=1.3] {$\Delta_F$}
           (-20pt,0pt) node[scale=1.3] {$=$}
           (0,0) circle (1.9pt) node[above right=8pt and -11pt,scale=1.3] {$w_1$}
                                node[above right=-2pt and 10pt,scale=1.3] {$w_2$}
                                node[below=10pt,scale=1.3] {$w_3$}
      -- ++(50:90pt) circle (1.9pt) node[below left=-2pt and 15pt,scale=1.3] {$w_{1,1}^I$}
                                    node[above right=-3pt and 2pt,scale=1.3] {$w_{1,1}^O$}
      -- ++(90pt,0pt) circle (1.9pt) node[above left=-3pt and 0pt,scale=1.3] {$w_{1,2}^I$}
                                     node[above right=-3pt and 2pt,scale=1.3] {$w_{1,2}^O$}
      -- ++(45pt,0pt)
         ++(15pt,0pt) circle (.1pt)
         ++(3pt,0pt) circle (.1pt)
         ++(3pt,0pt) circle (.1pt)
         ++(15pt,0pt)
      -- ++(45pt,0pt) circle (1.9pt) node [above left=-3pt and 0pt,scale=1.3] {$w_{1,s_1}$}
(0,0) -- ++(0:90pt) circle (1.9pt) node[above left=-2pt and 1pt,scale=1.3] {$w_{2,1}^I$}
                                     node[above right=-2pt and 2pt,scale=1.3] {$w_{2,1}^O$}
      -- ++(90pt,0pt) circle (1.9pt) node [above left=-2pt and 0pt,scale=1.3] {$w_{2,2}^I$}
                                     node [above right=-2pt and 2pt,scale=1.3] {$w_{2,2}^O$}
      -- ++(45pt,0pt)
         ++(15pt,0pt) circle (.1pt)
         ++(3pt,0pt) circle (.1pt)
         ++(3pt,0pt) circle (.1pt)
         ++(15pt,0pt)
      -- ++(45pt,0pt) circle (1.9pt) node [above left=-2pt and 0pt,scale=1.3] {$w_{2,s_2}$}
  (0,0) -- (-50:90pt) circle (1.9pt) node[above left=0pt and 9pt,scale=1.3] {$w_{3,1}^I$}
                                     node[below right=0pt and 2pt,scale=1.3] {$w_{3,1}^O$}  
      -- ++(90pt,0pt) circle (1.9pt) node[below left,scale=1.3] {$w_{3,2}^I$}
                                     node[below right=0pt and 2pt,scale=1.3] {$w_{3,2}^O$}
      -- ++(45pt,0pt)
         ++(15pt,0pt) circle (.1pt)
         ++(3pt,0pt) circle (.1pt)
         ++(3pt,0pt) circle (.1pt)
         ++(15pt,0pt)
      -- ++(45pt,0pt) circle (1.9pt) node [below left,scale=1.3] {$w_{3,s_3}$}
;
\end{tikzpicture}
\end{center}
\caption{Maximal splice diagram $\Delta_F$.}
\label{spldiaggen}
\end{figure}
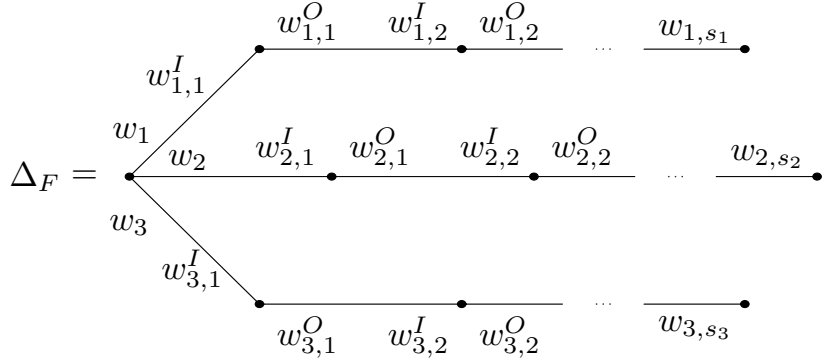
In Figure~\ref{spldiaggen} the weights at the node are labelled $w_1$,$w_2$ and $w_3$. Now, let $v_{i,j}$ the vertex with weight $-e_{i,j}$ in Figure~\ref{plgrgen}. The weights in $\Delta_F$ at the vertex $v_{i,j}$ of valence two are labelled $w_{i,j}^I$ and $w_{i,j}^O$, where $w_{i,j}^I$ is the weight ``inside''; \ie it is in the path connecting the node and the vertex $v_{i,j}$, and $w_{i,j}^O$ is the weight ``outside''; \ie it is in the path that connects $v_{i,j}$ with the end of the bamboo.

The weight $w_i$ is given by the numerator of the continued fraction $[e_{i,1}, \ldots, e_{i,s_i}]$, for $i=1,2,3$. Thus, by equation~\eqref{faccont}, the weight $w_i$ is given by
\begin{equation}\label{wealp}
w_i = \alpha_i \ .
\end{equation}
Analogously, the weight $w_{i,j}^O$ is given by the numerator of the continued fraction $[e_{i,j+1}, \ldots, e_{i,s_i}]$ for $i=1,2,3$ and $1 \leq j \leq s_i-1$.

Now, let us compute the weights $w_{i,j}^I$ for $i=1,2,3$ and $1 \leq j \leq s_i-1$. Let $\hat \omega_1$, $\hat \omega_2$ and $\hat \omega_3$ be given by
\begin{align*}
\hat \omega_1 & = e - [e_{2,1}, \ldots, e_{2,s_2}] - [e_{3,1}, \ldots, e_{3,s_3}] \\
\hat \omega_2 & = e - [e_{1,1}, \ldots, e_{1,s_1}] - [e_{3,1}, \ldots, e_{3,s_3}] \\
\hat \omega_3 & = e - [e_{1,1}, \ldots, e_{1,s_1}] - [e_{2,1}, \ldots, e_{2,s_2}]  \ . \\
\end{align*}
Then we have that the weight $w_{i,j}^I$ is given by the numerator of the continued fraction $w_{i,j}^I = [e_{i,j-1}, \ldots, e_1, \hat \omega_i]$ for $i=1,2,3$ and $1 \leq j \leq s_i-1$.


The computation of the weights $w_{1.s_1}$, $w_{2,s_2}$ and $w_{3,s_3}$ will be done in the next section, since it is necessary to talk about the resolution graph of the singularity.

\section{New Open books}

The plumbig graph $\Gamma_F$ determines a $4$-dimensional compact manifold $\widetilde V_F$ such that the boundary $\partial \widetilde V_F$ is homeomorphic to $L_F$.

The manifold $\widetilde V_F$ contains in its interior a exceptional divisor $E$, given by the plumbing description and such that $\widetilde V_F$ has $E$ as a strong deformation retract. In this setting, the intersection matrix corresponds to the intersection product in $\Ho{2}{\widetilde V_F}{\Q} \cong \Q^n$, where $n$ is the number of vertices in the graph. Thus, the generators are the irreducible components $E_i$ of the divisor $E$; each of these is a compact (non-singular) Riemann surface embedded in $\widetilde V_F$ and the product $E_i \cdot E_j$ corresponds to the entry $a_{i,j}$ of $A_F$. 

The fact that the matrix $A_F$ is negative definite implies, by Grauert's contractibility criterion (see for instance \cite[Section~III, Theorem~2.1]{ccs:BPVdV}), that the divisor $E$ can be blown down to a point, and we get a complex surface $V_F$ with normal singularity at $0$, given by \cite[Corollary~3]{MR2922705}, as the image of the divisor $E$.

Recall that the \textbf{canonical class} $K$ of a resolution $\widetilde V$ is the unique class in $\Ho{2}{\widetilde{V}}{\Q} \cong \Q^n$ that satisfies the \textit{adjunction formula}
\begin{equation*}
2g_i-2=E_i^2 + K \cdot E_i
\end{equation*}
for each irreducible component $E_i$ of the exceptional divisor (see \cite[Lemma~1.4]{DAH78}). Since the canonical class $K$ is by definition a homology class in $\Ho{2}{\widetilde{V}}{\Q}$, it is a rational linear combination of the generators: 
\begin{equation*}
 K = \sum_{i=1}^n k_i \, E_i \ , \text{with} \ k_i \in \Q \ .
\end{equation*}

\begin{defin}(see for instance \cite[Definition~1.2]{DAH78})
A normal surface singularity germ $(V,0)$ is \textbf{Gorenstein} if there is a nowhere-zero holomorphic two-form on the regular points of $V$. In other words, its canonical bundle $\mathcal{K}:= \bigwedge^2 \big(T^*(V\setminus\{0\}) \big)$ is holomorphically trivial in a punctured neighbourhood of $0$.
\end{defin}

Thus every isolated hypersurface singularity is Gorenstein, since if $V$ can be defined by a holomorphic map-germ $f$ in $\C^3$, then the gradient $\nabla f$ never vanishes away from $0$ and we can contract the holomorphic 3-form $dz_1 \wedge dz_2 \wedge dz_3$ with respect to $\nabla f$ to get a never vanishing holomorphic 2-form on a neighbourhood of $0$ in $ V$. More generally, A. Durfee in \cite{DAH78} introduced the following concept.

\begin{defin} The singularity germ $(V,0)$ is \textbf{numerically Gorenstein} if the canonical class $K$ of some good resolution $\widetilde V$ is integral.
\end{defin}

Let us recall that a resolution $\widetilde V$ is  good if each irreducible component $E_i$ is non-singular, and if $i \neq j$, $E_i$ intersects $E_j$ in at most one point where they met transversely and no three irreducible components intersect.

\begin{rmk}\label{integral}
It is easy to show (see \cite[Lemma~1.3]{DAH78} or \cite[Lemma~2.3]{MR1359516}) that the condition of $K$ being an integral class  is satisfied if and only if the canonical bundle $\mathcal{K}$ is topologically trivial. Therefore numerically Gorenstein is a condition independent of the choice of resolution (whose dual graph is $\Gamma$)  on the germ $(V,0)$ and every Gorenstein germ is numerically Gorenstein. 
\end{rmk}

Let us return to the case of our interest. Given the intersection matrix $A_F$, let $\widetilde{V}_F$ be a resolution of the corresponding complex surface singularity $(V_F,p)$ such that $L_F$ is homeomorphic to the boundary of $\widetilde{V}_F$. Let $E$ be the exceptional divisor contained in $\widetilde{V}_F$.

Given the irreducible components $E_i$ (with $1 \leq i \leq n$) of the exceptional divisor, let $\varrho_i$ be the number of intersection points of $E_i$ with $E_j$ for all $j \neq i$ and let $\hat{E}_i$ be $E_i$ with these intersection point removed. Denote
\begin{equation*}
\chi_i=\chi(\hat{E_i}) = \chi(E_i)-\varrho_i \ .
\end{equation*}

Let $K$ be the canonical class of the resolution $\widetilde{V}_F$, let
\begin{equation*}
D = -K - E \quad \text{where} \quad E=\sum_{i=1}^n E_i
\end{equation*}
and let $d_i \in \Q$ be such that
\begin{equation*}
D = \sum_{i=1}^n d_i E_i \ .
\end{equation*}

Thus the adjunction formula becomes:
\begin{align*}
D \cdot E_j &= (-K-E) \cdot E_j = -K \cdot E_j - E \cdot E_j = \chi(E_j) + E_j^2 - E \cdot E_j \\
            &= \chi(E_j) + E_j \cdot E_j - \sum_{i=1}^n E_i \cdot E_j  = \chi(E_j) + E_j \cdot E_j - E_j
               \cdot E_j - \varrho_j\\
            &= \chi(E_j) - \varrho_j \\
            &= \chi_j \ .
\end{align*}
Also
\begin{equation*}
D \cdot E_j = \left(\sum_{i=1}^n E_i\right) \cdot E_j = \sum_{i=1}^n d_i(E_i \cdot E_j)
\end{equation*}
and from the adjunction formula we obtain
\begin{align*}
d_1 E_1 \cdot E_1 + \cdots + d_n E_n \cdot E_1 &= \chi_1 \\
d_1 E_1 \cdot E_2 + \cdots + d_n E_n \cdot E_2 &= \chi_2 \\
\vdots & \\
d_1 E_1 \cdot E_n + \cdots + d_n E_n \cdot E_n &= \chi_n  \ , \\
\end{align*}
let us recall that the intersection numbers $E_i \cdot E_j$ are the entries of the matrix $A_F$, then
\begin{equation*}
(d_1, \ldots, d_n) A_F = (\chi_1, \ldots, \chi_n) \ ,
\end{equation*}
\ie
\begin{equation*}
(d_1, \ldots, d_n) = (\chi_1, \ldots, \chi_n) A_F^{-1}
\end{equation*}
from where:
\begin{equation}\label{coeffd}
d_i= - \sum_{j=1}^n l_{ij} \chi_j \quad \text{where} \quad (l_{ij})=-A_F^{-1} \ .
\end{equation}

Now, as
\begin{align*}
K &= -D -E = -\sum_{i=1}^n d_i E_i - \sum_{i=1}^n E_i = \sum_{i=1}^n (-d_i-1) E_i \\
  &= \sum_{i=1}^n \left( \sum_{j=1}^n l_{ij} \chi_j -1 \right) E_i \ ,
\end{align*}
then
\begin{equation}\label{kend}
k_i = -d_i-1 =\sum_{j=1}^n l_{ij} \chi_j -1 \quad \text{for all} \quad i=1, \ldots, n \ .
\end{equation}
Thus, the canonical class $K$ has integer coefficients if and only if the coefficients of $D$ are integers.

Now let us finish with the computations of the weights of the splice diagram $\Delta_F$: By \cite[Appendix~1]{MR2140991}, we have that
\begin{equation*}
\det(\Gamma_F)=\det(-A_F)= |\Ho{1}{L_F}{\mathbb{Z}}| = a^2 \delta \ .
\end{equation*}

Thus, applying \cite[Theorem~12.5]{MR2140991}, we can compute the weight $w_{i,s_i}$ at the $i$-th leaf:
\begin{equation}\label{leaves}
\begin{aligned}
w_{1.s_1} &= \frac{b}{aq} (\ad) + \frac{a^2 q}{aq} =a(p-1)\\
w_{2,s_2} &= \frac{b}{ap} (\ad) + \frac{a^2 p}{ap} =a(q-1)\\
w_{3,s_3} &= \frac{b}{a}  (\ad) + \frac{a^2 pq}{a} = br+apq \ . \\ 
\end{aligned}
\end{equation}

\begin{rmk}\label{detpath}
\begin{enumerate}
\item In a plumbing graph, $\chi_j=2 - \varrho_j = 0$ except at nodes and leafs.
\item By \cite[Theorem~9.1]{MR2140992}, if $i \neq j$ then $l_{ij}$ is the product of the weights adjacent to, but not on, the shortest path from $i$ to $j$ in $\Delta_F$, where $(l_ij)= -A_F^{-1}$.
\end{enumerate}
\end{rmk}

The following result is the main theorem of the present article. It classifies if the canonical class $K$ is integral or not, in terms of the exponents on the function $F$.

\begin{thm}\label{coeffint}
Let $F \colon (\C^3,0) \cong (\R^6,0) \to (\C,0) \cong (\R^2,0)$ be the real polynomial function defined by
\begin{equation*}
F(x,y,z)= \overline{xy}(x^p+y^q) + z^r \ ,
\end{equation*}
where $(p,q)$ are coprime integers and $r \in \Z{}^{+}$ with $p,q,r \geq 2$. Let $\delta=(pq-p-q,r)$, $a=r/\delta$ and $b=(pq-p-q)/\delta$. 

Let $\Gamma_F$ be the plumbing graph such that $L_F=\partial P(\Gamma_F)$. Let $\widetilde{V}$ be a resolution such that its graph is the dual of $\Gamma_F$ and let $K$ be the canonical class of $\widetilde{V}$. Then,
\begin{enumerate}[i)]
\item If $a \neq 2$, then the coefficients of $K$ are not integers; \ie the complex analytic germ $(V,p)$, such that $L_V \cong L_F$, is not numerically Gorenstein.
\item If $a = 2$, then the coefficients of $K$ are integers; \ie the complex analytic germ $(V,p)$, such that $L_V \cong L_F$, is numerically Gorenstein.
\end{enumerate}
\end{thm}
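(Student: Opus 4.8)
The plan is to reduce the integrality of $K$ to the integrality of the coefficients $d_i$ of $D=-K-E$, using equation~\eqref{kend}, and then to exploit the explicit description of $\Delta_F$ together with Remark~\ref{detpath} to compute the $d_i$ directly from the splice diagram without inverting $A_F$. By Remark~\ref{detpath}(1), the only nonzero entries $\chi_j$ occur at the node and at the three leaves; their values are $\chi_{\text{node}}=2-3=-1$ and $\chi_{\text{leaf}}=2-1=1$ at each of the three leaves (all intermediate valence-two vertices contribute $0$). Hence by equation~\eqref{coeffd} each $d_i$ is a $\Z$-linear combination, with coefficients $\pm 1$, of the four quantities $l_{i,\text{node}}$, $l_{i,\ell_1}$, $l_{i,\ell_2}$, $l_{i,\ell_3}$, and by Remark~\ref{detpath}(2) each of these is (up to dividing by $\det(\Gamma_F)=a^2\delta$, which is built into the convention $(l_{ij})=-A_F^{-1}$) the product of the splice-diagram weights adjacent to but not on the shortest path from $i$ to $j$. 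So the strategy is: first write $d_i$ as $\tfrac{1}{a^2\delta}$ times an explicit product/sum of the weights $w_1,w_2,w_3$, the $w_{k,j}^I,w_{k,j}^O$, and the leaf weights $w_{k,s_k}$ from \eqref{wealp} and \eqref{leaves}; then determine exactly when this is an integer for every $i$.

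The main computation is therefore to evaluate $d_i$ at the vertices where it is most likely to fail to be integral, namely at the node and along the three legs, and to find the common obstruction. I would single out the node first: by Remark~\ref{detpath}(2), $l_{\text{node},\ell_k}$ is, up to $a^2\delta$, the product of the weights hanging off the node away from the $k$-th leg, i.e.\ (roughly) $\alpha_i$ for the other two legs times the appropriate $w^O$ weights, while $l_{\text{node},\text{node}}=\det(\Gamma'_F)/\det(\Gamma_F)$ where $\Gamma'_F$ is $\Gamma_F$ with the node deleted, which factors as the product of the three continued-fraction numerators of the legs, i.e.\ $\alpha_1\alpha_2\alpha_3=aq\cdot ap\cdot a=a^3pq$ divided by $a^2\delta$. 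Plugging $\alpha_1=aq$, $\alpha_2=ap$, $\alpha_3=a$, and the leaf weights $a(p-1)$, $a(q-1)$, $br+apq$ from \eqref{leaves} into $d_{\text{node}}=-(l_{\text{node},\text{node}}\chi_{\text{node}}+\sum_k l_{\text{node},\ell_k}\chi_{\ell_k})$ and simplifying, the denominator that remains should be a divisor of $a$ (the $\delta$ and one factor of $a$ cancelling against $pq-p-q$-type terms via the congruences defining $\beta_1,\beta_2,\beta_3$ in Theorem~\ref{thprinc}); the residue of the numerator mod $a$ should be congruent to something like $\pm(p-1)(q-1)$ or a small constant, and since $\gcd(p,q)=1$ forces $p,q$ not both even, one checks this residue vanishes mod $a$ precisely when $a=2$. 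A parallel but easier check is needed at each leaf vertex and, crucially, at the valence-two vertices adjacent to the node on each leg, to confirm no \emph{further} obstruction appears; the telescoping structure of continued fractions (each $w^I_{i,j}$ and $w^O_{i,j}$ satisfies the standard numerator recursion) makes the leg computations collapse, so that integrality of $D$ everywhere is equivalent to integrality of $d_{\text{node}}$ together with the three $d_{\ell_k}$.

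Once $d_{\text{node}}$ and the three leaf coefficients are in hand, part (i) follows by exhibiting, for every $a\neq 2$, at least one vertex (expected to be the node, or a leaf when $a$ is odd) where $d_i\notin\Z$; here the $\gcd(p,q)=1$ hypothesis is what guarantees the relevant residue is a fixed nonzero class mod $a$ rather than something that could accidentally vanish. Part (ii) follows by substituting $a=2$ throughout and checking that \emph{all} the $d_i$ — equivalently, by the telescoping reduction, $d_{\text{node}},d_{\ell_1},d_{\ell_2},d_{\ell_3}$ — come out integral; with $a=2$ we have $\alpha_1=2q$, $\alpha_2=2p$, $\alpha_3=2$, $\delta=r/2$, $b=pq-p-q$, and one of $p,q$ is even, so the parity bookkeeping closes up. The step I expect to be the main obstacle is the explicit evaluation and simplification of $d_{\text{node}}$: it requires assembling the four weight-products correctly from the maximal splice diagram $\Delta_F$ (getting the "adjacent to but not on the path" bookkeeping exactly right), then using the modular conditions on $\beta_1,\beta_2,\beta_3$ from Theorem~\ref{thprinc} and the identity $\det(\Gamma_F)=a^2\delta$ to cancel everything down to a clean residue mod $a$. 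Everything else — the vanishing of $\chi_j$ away from node and leaves, the telescoping of the leg contributions, and the final parity argument using $\gcd(p,q)=1$ — is comparatively routine once that central identity is established.
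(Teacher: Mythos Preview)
Your overall strategy---reduce to integrality of $D$, compute $d_i$ at the node and the leaves via Remark~\ref{detpath}, and locate the obstruction using $\gcd(p,q)=1$---is exactly what the paper does. But two of your predicted details are wrong in ways that would derail the argument. First, you must separate out $a=1$: there the third leg disappears entirely (the Seifert pair $(a,\beta_3)$ is trivial), $\Gamma_F$ has only two leaves, and the obstruction lives at the leaves with $d_{\ell_1}=-p/\delta$, $d_{\ell_2}=-q/\delta$; coprimality of $p,q$ (and $\delta\mid pq-p-q$) forces at least one to be non-integral. Second, for $a>1$ the node computation gives
\[
d_{\text{node}}=\frac{(a-2)pq}{\delta}+b,
\]
so the residual denominator is $\delta$, not a divisor of $a$ as you guessed. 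Since $\delta\mid pq-p-q$ and $\gcd(p,q)=1$ force $\gcd(\delta,pq)=1$, one finds $d_{\text{node}}\in\Z$ iff $a\equiv 2\pmod\delta$. Only \emph{after} that does the obstruction migrate to the leaves: writing $a=c\delta+2$ with $c\neq 0$ you get $d_{\ell_1}=cp/a$, $d_{\ell_2}=cq/a$, and \emph{here} the denominator is $a$ and $\gcd(p,q)=1$ finishes it. So the argument genuinely has two layers with two different moduli, and collapsing them into a single ``residue mod $a$'' at the node would miss the case $a\equiv 2\pmod\delta$, $a\neq 2$.

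For $a=2$, your parity remark is also off: nothing forces one of $p,q$ to be even (take $p=3$, $q=5$, $r=14$), and $b=(pq-p-q)/\delta$, not $pq-p-q$. The verification there is just direct substitution: $d_{\text{node}}=b$, $d_{\ell_1}=d_{\ell_2}=0$, $d_{\ell_3}=(b-1)/2$ (an integer since $b$ is odd when $a=2$), and then the linear recursion along each leg propagates integrality to the interior vertices, as you correctly anticipated.
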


\begin{proof}
Let us separate the proof in two cases:
\begin{enumerate}[i)]
 \item When $a=1$, $b \in \Z{}^+$,
 \item when $a > 1$, $b \in \Z{}^+$.
\end{enumerate}

\subsection*{Case (i)} When $a=1$, $b \in \Z{}^+$.

In this case the Seifert invariants of $L_F$ are:
\begin{equation*}
\Bigl(\frac{\delta-1}{2}; \ - \frac{\delta}{pq}; \ (q, \beta_1),\,(p, \beta_2)\Bigr) \ ,
\end{equation*}
where
\begin{align*}
b \beta_1 & \equiv -1 \pmod{q} \ ,\\
b \beta_2 & \equiv -1 \pmod{p} \ . \\
\end{align*}
The corresponding plumbing graph $\Gamma_F$ is given in Figure~\ref{pmbgra1}.
\begin{figure}[H]
\begin{center}
\begin{tikzpicture}[xscale=.6, yscale=.5]
\filldraw[black, thin]
           (-90pt,0pt) node[scale=1.4] {$\Gamma_F$}
           (-60pt,0pt) node[scale=1.4] {$=$}
           (0,0) circle (1.9pt) node[left=4pt,scale=1.4]{$-e$}
      -- ++(60:90pt) circle (1.9pt) node[above,scale=1.4]{$-e_{1,1}$}
      -- ++(90pt,0pt) circle (1.9pt) node[above,scale=1.4]{$-e_{1,2}$}
      -- ++(45pt,0pt)
         ++(15pt,0pt) circle (.1pt)
         ++(3pt,0pt) circle (.1pt)
         ++(3pt,0pt) circle (.1pt)
         ++(15pt,0pt)
      -- ++(45pt,0pt) circle (1.9pt) node[above,scale=1.4]{$-e_{1,s_1}$}
(0,0) -- ++(0:90pt) circle (1.9pt) node[below,scale=1.4]{$-e_{2,1}$}
      -- ++(90pt,0pt) circle (1.9pt) node[below,scale=1.4]{$-e_{2,2}$}
      -- ++(45pt,0pt)
         ++(15pt,0pt) circle (.1pt)
         ++(3pt,0pt) circle (.1pt)
         ++(3pt,0pt) circle (.1pt)
         ++(15pt,0pt)
      -- ++(45pt,0pt) circle (1.9pt) node[below,scale=1.4]{$-e_{2,s_2}$}
;
\end{tikzpicture}
\end{center}
\caption{Plumbing graph $\Gamma_F$ when $a=1$.}
\label{pmbgra1}
\end{figure}
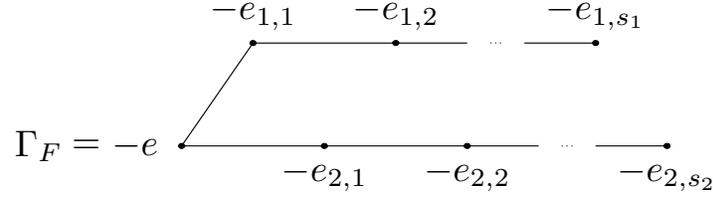

Thus, $w_1=q$ and $w_2=p$ since
\begin{align*}
\frac{q}{q-\beta_1} &= [e_{1,1}, \ldots, e_{1,s_1}] \ , \\
\frac{p}{p-\beta_2} &= [e_{2,1}, \ldots, e_{2,s_2}] \
\end{align*}
and $w_{1,s_1}=p-1$ and $w_{2,s_2}=q-1$ by equation~\eqref{leaves}.

The associated maximal splice diagram $\Delta_F$ is given in Figure~\ref{spldauno}.
\begin{figure}[H]
\begin{center}
\begin{tikzpicture}[xscale=.8, yscale=.7]
\filldraw[black, thin]
           (-65pt,0pt) node[scale=1.4] {$\Delta_F$}
           (-35pt,0pt) node[scale=1.4] {$=$}
           (0,0) circle (1.9pt) node[above right=8pt and -8pt,scale=1.4] {$q$}
                                node[below right=0pt and 8pt,scale=1.4] {$p$}
      -- ++(60:90pt) circle (1.9pt) node[below left=1pt and 8pt,scale=1.4] {$w_{1,1}^I$}
                                    node[above right=-3pt and 2pt,scale=1.4] {$w_{1,1}^O$}
      -- ++(65pt,0pt)
         ++(15pt,0pt) circle (.1pt)
         ++(3pt,0pt) circle (.1pt)
         ++(3pt,0pt) circle (.1pt)
         ++(15pt,0pt)
      -- ++(75pt,0pt) circle (1.9pt) node [above left=-3pt and 0pt,scale=1.3] {$(p-1)$}
(0,0) -- ++(0:90pt) circle (1.9pt) node[above left=1pt and 1pt,scale=1.4] {$w_{2,1,l}$}
                                     node[above right=-2pt and 2pt,scale=1.4] {$w_{2,1,r}$}
      -- ++(65pt,0pt)
         ++(15pt,0pt) circle (.1pt)
         ++(3pt,0pt) circle (.1pt)
         ++(3pt,0pt) circle (.1pt)
         ++(15pt,0pt)
      -- ++(75pt,0pt) circle (1.9pt) node [above left=-2pt and 0pt,scale=1.3] {$(q-1)$}
;
\end{tikzpicture}
\end{center}
\caption{Maximal splice diagram $\Delta_F$.}
\label{spldauno}
\end{figure}
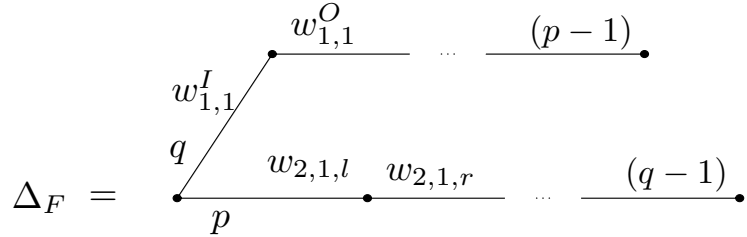

Let $d_1$ and $d_2$ be the entries of matrix $D$ corresponding to the vertices of valence $1$ (the leaves of the diagram). Using equation~\eqref{coeffd} and Remark~\ref{detpath} we compute $d_1$ and $d_2$ and we obtain
\begin{align*}
d_{1} &= - \left( \frac{p-1}{\delta} + \frac{1}{\delta}\right) = - \frac{p}{\delta} \\
d_{2} &= - \left( \frac{q-1}{\delta} + \frac{1}{\delta}\right) = - \frac{q}{\delta} \\
\end{align*}
As $(p,q)=1$, $\delta$ cannot divide $p$ and $q$, then $d_{1}$ and $d_{2}$ cannot be integers at the same time.

\subsection*{Case (ii)} When $a > 1$, $b \in \Z{}^+$.
In this case the Seifert invariants of the link $L_F$ are:
\begin{equation*}
\Bigl(\frac{\delta-1}{2}; \ - \frac{\delta}{apq}; \ (aq, \beta_1),\,(ap, \beta_2),\,(a,\beta_3)\Bigr) \ .
\end{equation*}

By Figure~\ref{spldiaggen} and formulae~\eqref{wealp} and~\eqref{leaves}, we have that
\begin{align*}
 w_1&=aq & w_2&=ap & w_3&=a \\
w_{1,s_1}&=a(p-1) & w_{2,s_2}&=a(q-1) & w_{3,s_3}&=br+apq \ .
\end{align*}
Thus the corresponding splice diagram $\Delta_F $ is given by Figure~\ref{spldpart}.
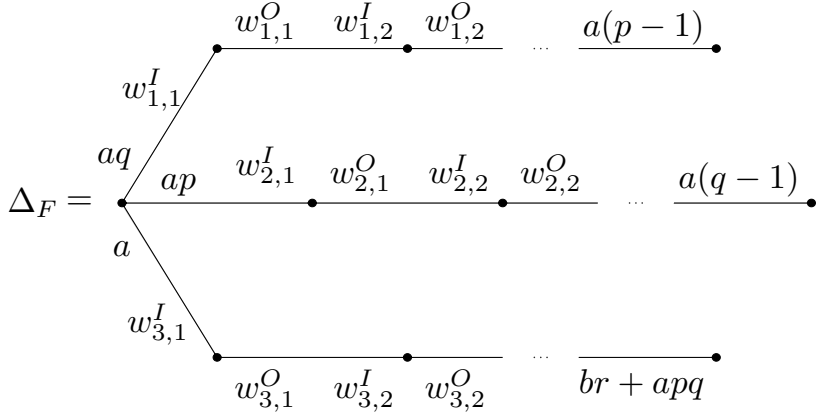
\begin{figure}[H]
\begin{center}
\begin{tikzpicture}[xscale=.8, yscale=.75]
\filldraw[black, thin]
           (-42pt,0pt) node[scale=1.3] {$\Delta_F$}
           (-20pt,0pt) node[scale=1.3] {$=$}
           (0,0) circle (1.9pt) node[above right=8pt and -14pt,scale=1.3] {$aq$}
                                node[above right=-2pt and 10pt,scale=1.3] {$ap$}
                                node[below=10pt,scale=1.3] {$a$}
      -- ++(60:90pt) circle (1.9pt) node[below left=3pt and 8pt,scale=1.3] {$w_{1,1}^I$}
                                    node[above right=-3pt and 2pt,scale=1.3] {$w_{1,1}^O$}
      -- ++(90pt,0pt) circle (1.9pt) node[above left=-3pt and 0pt,scale=1.3] {$w_{1,2}^I$}
                                     node[above right=-3pt and 2pt,scale=1.3] {$w_{1,2}^O$}
      -- ++(45pt,0pt)
         ++(15pt,0pt) circle (.1pt)
         ++(3pt,0pt) circle (.1pt)
         ++(3pt,0pt) circle (.1pt)
         ++(15pt,0pt)
      -- ++(65pt,0pt) circle (1.9pt) node [above left=-3pt and 0pt,scale=1.3] {$a(p-1)$}
(0,0) -- ++(0:90pt) circle (1.9pt) node[above left=1pt and 1pt,scale=1.3] {$w_{2,1}^I$}
                                     node[above right=-2pt and 2pt,scale=1.3] {$w_{2,1}^O$}
      -- ++(90pt,0pt) circle (1.9pt) node [above left=-2pt and 0pt,scale=1.3] {$w_{2,2}^I$}
                                     node [above right=-2pt and 2pt,scale=1.3] {$w_{2,2}^O$}
      -- ++(45pt,0pt)
         ++(15pt,0pt) circle (.1pt)
         ++(3pt,0pt) circle (.1pt)
         ++(3pt,0pt) circle (.1pt)
         ++(15pt,0pt)
      -- ++(65pt,0pt) circle (1.9pt) node [above left=-2pt and 0pt,scale=1.3] {$a(q-1)$}
  (0,0) -- (-60:90pt) circle (1.9pt) node[above left=0pt and 6pt,scale=1.3] {$w_{3,1}^I$}
                                     node[below right=0pt and 2pt,scale=1.3] {$w_{3,1}^O$}  
      -- ++(90pt,0pt) circle (1.9pt) node[below left,scale=1.3] {$w_{3,2}^I$}
                                     node[below right=0pt and 2pt,scale=1.3] {$w_{3,2}^O$}
      -- ++(45pt,0pt)
         ++(15pt,0pt) circle (.1pt)
         ++(3pt,0pt) circle (.1pt)
         ++(3pt,0pt) circle (.1pt)
         ++(15pt,0pt)
      -- ++(65pt,0pt) circle (1.9pt) node [below left,scale=1.3] {$br+apq$}
;
\end{tikzpicture}
\end{center}
\caption{Maximal splice diagram $\Delta_F$ with weights computed.}
\label{spldpart}
\end{figure}

Let $d$ be the entry of matrix $D$ corresponding to the node of the graph. Let us compute $d$ (using Remark~\ref{detpath}):
\begin{align*}
d &= - \left( - \frac{a^3pq}{\ad} + \frac{a^2 p}{\ad} + \frac{a^2 q}{\ad} + \frac{a^2 pq}{\ad} \right) \\
  &= - \left( \frac{-a^2 \left( (a-1) pq -p-q \right)}{\ad} \right)= \frac{(a-1) pq -p-q}{\delta} \\
  &= \frac{(a-2)pq + b \delta}{\delta} = \frac{(a-2)pq}{\delta} + b
\end{align*}

Now we have three subcases to study:
\begin{enumerate}[i')]
\item If $a \neq 2$ and $a \not\equiv 2 \pmod{\delta}$,
\item if $a \neq 2$ and $a \equiv 2 \pmod{\delta}$,
\item if $a=2$.
\end{enumerate}

\subsubsection*{Subcase (i')}

If $a \neq 2$ and $a \not\equiv 2 \pmod{\delta}$, we claim that $d$ is not an integer.

Indeed, if $\delta \mid p$, as $\delta \mid (pq-p-q)$, then $\delta \mid q$, but $(p,q)=1$. Then $\delta \nmid p$. By the same argument, $\delta \nmid q$.

If $\delta$ is prime, then follows that $\delta \nmid pq$ and since $a-2 \not\equiv 0 \pmod{\delta}$, then $\delta \nmid a-2$ and $d$ is not an integer.

Now, if $\delta$ is not prime, let $\delta_1$ be a factor of $\delta$ such that $\delta_1 \mid a-2$ (if there is not such factor, it follows the previous argument). Let $\delta = \delta_1 \delta_2$. Then $d$ is integer if and only if $\delta_2 \mid pq$.

Analogously to $\delta$, $\delta_2 \nmid p$ and $\delta_2 \nmid q$. Moreover, if there exist $d_3$ such that $1 < d_3 < d_2$ and $d_3 \mid p$, then the same argument tells us that $d_3 \mid q$, which is a contradiction.

Then we obtain that if $a \not\equiv 2 \pmod{\delta}$, then $d$ is not integer and therefore the class $D$ has not integer coefficients.

\subsubsection*{Subcase (ii')}

If $a \neq 2$ and $a \equiv 2 \pmod{\delta}$,  then $d \in \Z{}$.

Let $a=c \delta+2$, and let $d_{1}$ and $d_{2}$ be the entries of matrix $D$ corresponding to the leaves with weights $a(p-1)$ and $a(q-1)$ respectively (as before). By equation~\eqref{coeffd} and Remark~\ref{detpath} we have that
\begin{align*}
d_{1} &= - \left(- \frac{a^2 p}{\ad} + \frac{ap-a}{\ad} + \frac{a}{\ad} + \frac{ap}{\ad} \right) \\
        &= - \frac{-a^2 p +2ap}{\ad} = \frac{(a-2) p}{a \delta} = \frac{cp}{a} \\
d_{2} &= \frac{cq}{a} \ . \\
\end{align*}
Then we claim that $d_{1}$ and $d_{2}$ cannot both be integers.

In order to have $d_{1}$ and $d_{2}$ integers, we should have that $a \mid cp$ and $a \mid cq$, then 
let $a_1 \in \Z{}$ be such that $a=a_1 a_2$ with $a_2 \mid c$ and $a_1 \mid p$, $a_1 \mid q$ which it is not possible since $(p,q)=1$.

Then we obtain that if $a \neq 2$ and $a \equiv 2 \pmod{\delta}$, the class $D$ has not integer coefficients.

\subsubsection*{Subcase (iii')}

Let $a=2$. We compute the Seifert invariants and the splice diagram $\Delta_F$ in this particular case. The Seifert invariants are:
\begin{equation*}
\Bigl(\frac{\delta-1}{2}; \ - \frac{\delta}{2pq}; \ (2q, \beta_1),\,(2p, \beta_2),\,(2,1)\Bigr) \ ,
\end{equation*}
where
\begin{align*}
b \beta_1 & \equiv -1 \pmod{2q} \ ,\\
b \beta_2 & \equiv -1 \pmod{2p} \ . \\
\end{align*}
and the associate splice diagram $\Delta_F$ is given by Figure~\ref{spldados}.
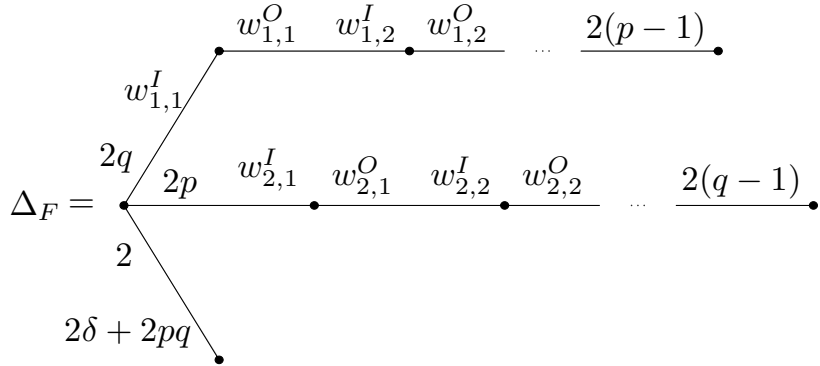
\begin{figure}[H]
\begin{center}
\begin{tikzpicture}[xscale=.8, yscale=.75]
\filldraw[black, thin]
           (-42pt,0pt) node[scale=1.3] {$\Delta_F$}
           (-20pt,0pt) node[scale=1.3] {$=$}
           (0,0) circle (1.9pt) node[above right=8pt and -14pt,scale=1.3] {$2q$}
                                node[above right=-2pt and 10pt,scale=1.3] {$2p$}
                                node[below=10pt,scale=1.3] {$2$}
      -- ++(60:90pt) circle (1.9pt) node[below left=3pt and 8pt,scale=1.3] {$w_{1,1}^I$}
                                    node[above right=-3pt and 2pt,scale=1.3] {$w_{1,1}^O$}
      -- ++(90pt,0pt) circle (1.9pt) node[above left=-3pt and 0pt,scale=1.3] {$w_{1,2}^I$}
                                     node[above right=-3pt and 2pt,scale=1.3] {$w_{1,2}^O$}
      -- ++(45pt,0pt)
         ++(15pt,0pt) circle (.1pt)
         ++(3pt,0pt) circle (.1pt)
         ++(3pt,0pt) circle (.1pt)
         ++(15pt,0pt)
      -- ++(65pt,0pt) circle (1.9pt) node [above left=-3pt and 0pt,scale=1.3] {$2(p-1)$}
(0,0) -- ++(0:90pt) circle (1.9pt) node[above left=1pt and 1pt,scale=1.3] {$w_{2,1}^I$}
                                     node[above right=-2pt and 2pt,scale=1.3] {$w_{2,1}^O$}
      -- ++(90pt,0pt) circle (1.9pt) node [above left=-2pt and 0pt,scale=1.3] {$w_{2,2}^I$}
                                     node [above right=-2pt and 2pt,scale=1.3] {$w_{2,2}^O$}
      -- ++(45pt,0pt)
         ++(15pt,0pt) circle (.1pt)
         ++(3pt,0pt) circle (.1pt)
         ++(3pt,0pt) circle (.1pt)
         ++(15pt,0pt)
      -- ++(65pt,0pt) circle (1.9pt) node [above left=-2pt and 0pt,scale=1.3] {$2(q-1)$}
  (0,0) -- (-60:90pt) circle (1.9pt) node[above left=0pt and 6pt,scale=1.3] {$2 \delta + 2pq$}
;
\end{tikzpicture}
\end{center}
\caption{Maximal splice diagram $\Delta_F$ when $a=2$.}
\label{spldados}
\end{figure}

Then, by equation~\ref{coeffd} and Remark~\ref{detpath},
\begin{align*}
d &=- \left( - \frac{8pq}{4\delta} + \frac{4p}{4\delta} + \frac{4q}{4\delta} + \frac{4pq}{4\delta} \right) = - \frac{-4pq+4p+4q}{4\delta} \\
  &= - \frac{-4(pq-p-q)}{4\delta} = \frac{b \delta}{\delta} = b \\
d_{1} &= - \left( - \frac{4p}{4\delta} + \frac{2(p-1)}{4\delta} + \frac{2}{4\delta} + \frac{2p}{4\delta} \right) = - \frac{-4p + 2p -2 +2 +2p}{4\delta}  \\
  &= 0 \\
d_{2} &= - \left( - \frac{4q}{4\delta} + \frac{2(q-1)}{4\delta} + \frac{2}{4\delta} + \frac{2q}{4\delta} \right) = - \frac{-4q + 2q -2 +2 +2q}{4\delta}  \\
  &= 0 \\
d_{3} &= - \left( - \frac{4pq}{4\delta} + \frac{2\delta+2pq}{4\delta} + \frac{2p}{4\delta} + \frac{2q}{4\delta} \right) = - \frac{2\delta -4pq+2pq+2p+2q}{4\delta} \\
  &= - \frac{2\delta -2b\delta}{4\delta} = - \frac{1-b}{2} = \frac{b-1}{2} \ ,
\end{align*}
where $d_i$ is the entry of the matrix associated to the $i$-th leaf of the graph.

Thus, considering equation~\eqref{kend}, we obtain that the coefficients of the canonical class are given by
\begin{align*}
k         &= - (b)-1= -b-1 \\
k_{1,s_1} &= - (0)-1=-1 \\
k_{2,s_2} &= - (0)-1=-1 \\
k_3       &= -(\frac{b-1}{2}) -1 = - \frac{b+1}{2} \ . \\
\end{align*}

By the linear system given by the adjunction formula, we have that if $k_{i,s_i}$ is integer, then $k_{i,j}$ is integer for $i=1,2$ and $1 \leq j < s_i$. Thus, in this case, all the coefficients of $K$ are integers.
\end{proof}

%
%

\begin{cor}
If $a \neq 2$, does not exist a complex analytic germ $G \colon (\C^3,0) \to (\C,0)$ with isolated singularity at the origin such that the link $L_G$ is isomorphic to the link $L_F$; \ie the open-book decomposition of the sphere $\Sp{5}$ given by the Milnor fibration of $F$ is not given by the Milnor fibration of a normal Gorenstein complex hypersurface singularity $(V,p)$.
\end{cor}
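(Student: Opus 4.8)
The plan is to deduce this directly from Theorem~\ref{coeffint}(i) together with the standard fact, recalled above, that every isolated hypersurface singularity in $\C^3$ is Gorenstein. Suppose, for contradiction, that such a germ $G$ exists, and let $(V_G,0)=(G^{-1}(0),0)$ be the hypersurface germ it defines. Since $G$ has an isolated singularity at $0$, the surface $(V_G,0)$ is a complete intersection which is regular in codimension one, hence normal; moreover, contracting the holomorphic $3$-form $dz_1\wedge dz_2\wedge dz_3$ against $\nabla G$ produces a nowhere-vanishing holomorphic $2$-form on $V_G\setminus\{0\}$, so $(V_G,0)$ is Gorenstein, hence numerically Gorenstein by Remark~\ref{integral}: the canonical class of any good resolution of $(V_G,0)$ is integral.

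Next I would use the hypothesis $L_G\cong L_F$ (as oriented links) to compare resolution graphs. For normal surface singularities the dual graph of the minimal good resolution is a complete invariant of the oriented link, so the minimal good resolution of $(V_G,0)$ has the same dual graph as that of the germ $(V_F,p)$ furnished (via \cite[Theorem~6]{Neu:calcplumb}) by the Seifert invariants of $L_F$; in particular this common graph is obtained from $\Gamma_F$ by blowing down $(-1)$-vertices. By Remark~\ref{integral} the numerically-Gorenstein property depends only on this graph, hence only on the link. Therefore $(V_F,p)$ would be numerically Gorenstein as well. But this contradicts Theorem~\ref{coeffint}(i), which states that when $a\neq 2$ the canonical class of a resolution whose graph is the dual of $\Gamma_F$ has non-integer coefficients, i.e. $(V_F,p)$ is not numerically Gorenstein. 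Hence no such $G$ exists.

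Finally, I would note that this yields the open-book reformulation: if the open-book decomposition of $\Sp{5}$ determined by $\phi_F=\frac{F}{|F|}$ coincided with the one determined by the Milnor fibration of some hypersurface germ $G$ in $\C^3$, then in particular the two bindings would agree, giving $L_G\cong L_F$, which we have just excluded. I do not expect a genuine obstacle here: all the substantive work is already contained in Theorem~\ref{coeffint}, and the corollary is a short logical deduction. The only point requiring care is the chain ``isolated hypersurface in $\C^3$ $\Rightarrow$ Gorenstein $\Rightarrow$ numerically Gorenstein'' combined with the topological invariance of the minimal good resolution graph, all of which are recorded in the material preceding the statement.
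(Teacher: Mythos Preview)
Your proposal is correct and is precisely the intended deduction: the paper gives no separate proof of this corollary, treating it as immediate from Theorem~\ref{coeffint}(i) together with the facts recorded just before (isolated hypersurface $\Rightarrow$ Gorenstein $\Rightarrow$ numerically Gorenstein, and numerically Gorenstein depends only on the link via Remark~\ref{integral}). Your write-up simply makes this chain of implications explicit.
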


When $a=2$, we would like to know if, given a complex analytic germ $G \colon (\C^3,0) \to (\C,0)$ such that the link $L_G$ is isomorphic to the link $L_F$, the Milnor fibrations are the same or not. If they are not, we would have new open book decompositions of $\Sp{5}$. 

In general we have not been able to find an answer, however, in the case $a=r=2$, we know a complex singularity that realises the link $L_F$, then it is possible to compare directly the corresponding Milnor fibrations and we obtain that they are not equivalent.

\begin{thm}\label{opb2}
Let $F \colon (\C^3,0) \cong (\R^6,0) \to (\C,0) \cong (\R^2,0)$ be the real polynomial function defined by
\begin{equation*}
F(x,y,z)= \overline{xy}(x^p+y^q) + z^2 \ .
\end{equation*}
Let $G \colon (\C^3,0) \cong (\R^6,0) \to (\C,0) \cong (\R^2,0)$ be the complex polynomial function defined by
\begin{equation*}
G(x,y,z)= xy(x^p+y^q) + z^2 \ .
\end{equation*}
Then the link $L_F$ is homeomorphic to the link $L_G$ but the corresponding Milnor fibrations are not equivalent; \ie the corresponding open book decompositions of $\Sp{5}$ are not equivalent.
\end{thm}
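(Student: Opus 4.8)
This statement has two parts: the homeomorphism $L_F\cong L_G$, and the non-equivalence of the two open-book decompositions of $\Sp5$.

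\textbf{The link.} Both $F$ and $G$ are of the form $h(x,y)+z^2$ with $h$ polar weighted-homogeneous (for $F$, $h=\overline{xy}(x^p+y^q)$; for $G$, $h=xy(x^p+y^q)$), so in each case the singularity is conical and the link is the double cyclic cover of $\Sp3$ branched over the link $\{h=0\}\cap\Sp3$, the corresponding unramified $\Z{2}$-cover over the complement being classified by the homomorphism sending a loop to the parity of its linking number with $\{h=0\}$. Now $\overline{xy}(x^p+y^q)$ and $xy(x^p+y^q)$ have the same zero set, and replacing a holomorphic factor by its conjugate changes the winding number of $h$ around each branch by an even amount; hence the two branched double covers coincide and $L_F\cong L_G$. (More in the spirit of Section~4: with $r=2$ the number $pq-p-q=(p-1)(q-1)-1$ is odd, so $\delta=1$, $a=2$, $b=pq-p-q$ and $L_F=\partial P(\Gamma_F)$ with $\Gamma_F$ the graph of the case $a=2$ treated in the proof of Theorem~\ref{coeffint}; on the other hand $G=g(x,y)+z^2$ is the suspension of the reduced plane-curve germ $g=xy(x^p+y^q)$, and the resolution graph of $\{G=0\}$, obtained from the embedded resolution of $\{g=0\}$ by the usual double-cover recipe, is again $\Gamma_F$.)

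\textbf{Reduction.} Suppose the two open books were equivalent, i.e.\ some homeomorphism $\Psi\colon\Sp5\to\Sp5$ carries $L_F$ to $L_G$ and the fibres of $\phi_F$ to those of $\phi_G$. Then $\Psi$ carries a page of $\phi_F$ onto a page of $\phi_G$ and conjugates the monodromies as mapping classes; hence the Milnor fibres $\calm{F}_F$ and $\calm{F}_G$ are homeomorphic, and the two monodromies induce conjugate automorphisms of $\mathrm{H}_*$ of the page, in particular with the same characteristic polynomial — so in particular of the same degree (the Milnor number of the page) and of the same multiplicative order. It thus suffices to exhibit one such invariant distinguishing $F$ from $G$.

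\textbf{The distinguishing invariant.} Both $F$ and $G$ are polar weighted-homogeneous, so each Milnor monodromy is the corresponding $\Sp1$-action and is periodic. For $G$, a holomorphic weighted-homogeneous germ of type $(2q,2p,\,pq+p+q;\ 2(pq+p+q))$ (primitive weight vector, isolated singularity since $g$ is reduced), the monodromy $(x,y,z)\mapsto(\xi^{2q}x,\xi^{2p}y,\xi^{pq+p+q}z)$, $\xi=e^{2\pi i/d}$ with $d=2(pq+p+q)$, has order $2(pq+p+q)$; and the Milnor number is $\mu(G)=(p+1)(q+1)$, so $\calm{F}_G\simeq\bigvee^{(p+1)(q+1)}\Sp2$. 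For $F$, the angular weight vector is $(2q,2p,\,pq-p-q)$ with $c=2(pq-p-q)$, so the monodromy $(x,y,z)\mapsto(e^{2\pi iq/(pq-p-q)}x,\,e^{2\pi ip/(pq-p-q)}y,\,-z)$ has order $2(pq-p-q)$ (using that $pq-p-q$ is odd and coprime to $p$ and to $q$); and, for $r=2$, the Thom--Sebastiani join formula for mixed polynomials identifies $\calm{F}_F$ with the join of the Milnor fibre of the two-variable mixed germ $\fbg=\overline{xy}(x^p+y^q)$ with two points, so $\mu(F)=b_1(\Ffg)$. One now compares either the orders of the homological monodromies — which are $2(pq+p+q)$ and $2(pq-p-q)$, hence different, as soon as one knows the homological monodromy attains the full geometric period (Milnor--Orlik for $G$; the explicit characteristic polynomial of the monodromy of a polar weighted-homogeneous germ, or the Alexander polynomial of the fibred multilink of $\phi_F$, for $F$) — or the Milnor numbers, computing $b_1(\Ffg)$ from the Pichon--Seade description of the fibre of $\fbg$ and checking $b_1(\Ffg)\neq(p+1)(q+1)$. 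Either way the Reduction is contradicted.

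\textbf{Main obstacle.} The common crux is the one homological computation left open above: pinning down a monodromy invariant of the \emph{mixed} Milnor fibre $\calm{F}_F$ — its characteristic polynomial (equivalently, confirming that the geometric period $2(pq-p-q)$ is realised on $\mathrm{H}_*(\calm{F}_F)$) or merely its rank $b_1(\Ffg)$ — and verifying that it disagrees with the corresponding invariant of $G$. This is where genuine care is required, because the homological monodromy order can in general be strictly smaller than the geometric one; everything else (the branched-cover identification of the links, the reduction to an invariant of the page, and the arithmetic $2(pq+p+q)\neq2(pq-p-q)$) is routine.
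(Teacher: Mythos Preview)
Your strategy for the second half is the paper's strategy---reduce to the pages via the Join theorem and compare an invariant of the fibre---but by your own ``Main obstacle'' paragraph the argument is not finished: you reduce everything to computing either $b_1(\Ffg)$ or the homological monodromy order on the mixed side, and then stop. That deferred computation is exactly where the theorem lives. The paper dispatches it in one stroke by citing Pichon--Seade \cite{PASJ03}, who show that already the two--variable fibres of $f=\overline{xy}(x^p+y^q)$ and $g=xy(x^p+y^q)$ are bouquets of circles with $\mu_f\neq\mu_g$; the Join theorem then gives $\F_F\simeq\bigvee^{\mu_f}\Sp{2}\not\simeq\bigvee^{\mu_g}\Sp{2}\simeq\F_G$. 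Concretely your value $\mu_g=(p+1)(q+1)$ is correct, and the missing piece is $\mu_f=(p-1)(q-1)$, which one reads off from the multilink/resolution description in \cite{MR2115674,PASJ03} (the paper recovers $\chi(\F_f)=-(pq-p-q)$ in \S5). Your monodromy--order alternative would still require verifying, on the mixed side, that the homological period attains the geometric value $2(pq-p-q)$; you did not do this, and it is no easier than the Betti--number route.

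For $L_F\cong L_G$ your branched--double--cover argument is a genuinely different and shorter route than the paper's. The paper instead computes the Seifert invariants of $L_G$ by the same recipe as Theorem~\ref{thprinc} (with $b'=pq+p+q$ in place of $b=pq-p-q$) and checks arithmetically that $\beta_i=\beta_i'$. Your shortcut is sound in outline, but the claim that $L_F$ is the double cover of $\Sp{3}$ branched along $L_f$ is not the holomorphic suspension statement---it relies on the polar weighted homogeneous structure of $F$ to identify the link with that branched cover---and deserves a line of justification rather than bare assertion.
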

\begin{proof}
In this case, in order to prove that $L_F$ is homeomorphic to $L_G$, we compute the Seifert invariants in both cases.

Since $r=2$, $a=2$ and $\delta$ is odd, then $\delta$ can only be $1$. Then, by Theorem~\ref{thprinc}, the Seifert invariants of $L_F$ are
\begin{equation*}
\Bigl(0; \ - \frac{1}{2pq}; \ (2q, \beta_1),\,(2p, \beta_2),\,(2, 1)\Bigr) \ ,
\end{equation*}
where
\begin{align*}
b \beta_1 & \equiv -1 \pmod{2q} \ ,\\
b \beta_2 & \equiv -1 \pmod{2p} \ .\\
\end{align*}
Also , since $a=2$, we have that $\beta_3=1$.

Now, let $G \colon (\C^3,0) \to (\C,0)$ be given by 
\begin{equation*}
G(x,y,z)=xy(x^p+y^q)+z^2
\end{equation*}
with $p,q \in \Z{}^{+}$, $p,q \geq 2$ and $\gcd(p,q)=1$. Let $L_G$ be the link of the singularity at the origin of $\pr{G}(0)$.

We can compute the Seifert invariants of $L_G$ analogously to the computation of the Seifert invariants of $L_F$. In this case, let $\delta'$ be the $\gcd(pq+p+q,2)=1$ and let $b'=(pq+p+q)/\delta$. Then, the genus of the surface of orbits is $(\delta'-1)/2=0$ and the rational Euler class is given by $-\delta'/2pq=-1/2pq$.

Then the Seifert invariants of $L_G$ are
\begin{equation*}
\Bigl(0; \ - \frac{1}{2pq}; \ (2q, \beta'_1),\,(2p, \beta'_2),\,(2, 1)\Bigr) \ ,
\end{equation*}
where
\begin{align*}
b' \beta'_1 & \equiv 1 \pmod{2q} \ ,\\
b' \beta'_2 & \equiv 1 \pmod{2p} \ .\\
\end{align*}
But, let us take $\beta_1$ of the first Seifert pair of $L_F$, we have that:
\begin{align*}
b \beta_1                     & \equiv -1 \pmod{2q} \\
(pq-p-q) \beta_1              & \equiv -1 \pmod{2q} \\
\left(q(p-1)-p\right) \beta_1 & \equiv -1 \pmod{2q} \\
q(p-1) \beta_1 - p \beta_1    & \equiv -1 \pmod{2q} \\
2q(p-1) \beta_1 - 2p \beta_1  & \equiv -2 \pmod{2q} \\
                - 2p \beta_1  & \equiv -2 \pmod{2q} \\
\end{align*}
Now
\begin{align*}
b' \beta'_1                         & \equiv 1 \pmod{2q} \\
(pq+p+q) \beta'_1                   & \equiv 1 \pmod{2q} \\
(b + 2p +2q) \beta'_1               & \equiv 1 \pmod{2q} \\
b \beta'_1 +2p \beta'_1 +2q\beta'_1 & \equiv 1 \pmod{2q} \\
b \beta'_1 +2p \beta'_1 & \equiv 1 \pmod{2q} \\
\end{align*}
But then, by the previous equality for $\beta_1$, we have that
\begin{align*}
- 2p \beta_1  & \equiv -2 \pmod{2q} \\
b \beta_1 +2p \beta_1 & \equiv -1+2 \pmod{2q} \\
b \beta_1 +2p \beta_1 & \equiv 1 \pmod{2q} \\
\end{align*}
As $0 \leq \beta_1 < \alpha_1$ and $0 \leq \beta'_1 < \alpha_1$, we have that $\beta_1=\beta'_1$. Analogously for $\beta_2$ and $\beta'_2$.

Thus the Seifert invariants for $L_G$ are:
\begin{equation*}
\Bigl(0; \ - \frac{1}{2pq}; \ (2q, \beta_1),\,(2p, \beta_2),\,(2, 1)\Bigr) \ ,
\end{equation*}
where
\begin{align*}
b \beta_1 & \equiv -1 \pmod{2q} \ ,\\
b \beta_2 & \equiv -1 \pmod{2p} \ .\\
\end{align*}
Then the link $L_F$ is homeomorphic to the link $L_G$.

However, in this case, in \cite{PASJ03} it is established that the Milnor fbrations of the functions $f$ and $g$ given by
\begin{align*}
f(x,y) & = \overline{xy}(x^p+y^q) \\ 
g(x,y) & = xy(x^p+y^q) \\  
\end{align*}
are different because the corresponding Milnor fibres have different homotopy type. It is known that both fibres have the homotopy type of wedges of circles; let $\mu_f$ and $\mu_g$ be the corresponding number of spheres in the wedges, then $\mu_f \neq \mu_g$.

We apply the Join theorem for polar weighted homogeneous polynomials (see \cite[Theorem~4.1]{Cis09}), which says that the Milnor fibre of the sum (over independent variables) of two polar weighted homogeneous polynomials is homotopically equivalent to the join of the Milnor fibres of the polar weighted homogeneous polynomials. Then the Milnor fibre $\F_F$ has the homotopy type of a wedge of $\mu_f$ $2$-spheres and the Milnor fibre $\F_G$ has the homotopy type of a wedge of $\mu_g$ $2$-spheres; thus the Milnor fibrations of $F$ and $G$ are different.
\end{proof}

However, it remains open the following questions: There exists a complex germ $H$ such that $H$ is different from $G$ but the link $L_H$ is homeomorphic to $L_F$? If it is the case, how are the corresponding Milnor fibrations?



\section{Examples}
In \cite{MR2922705}, we obtain from the Laufer-Steenbrink formula a congruence that can be used as an obstruction for the equivalence of Milnor fibrations of $F$ and the germ of a complex singularity from $\C^3$ to $\C$. In this section we illustrate how we use such congruence in the case $a=2$ and $b=1$.

Let $b=1$; i.e. $pq-p-q=\delta$ and let $r=a \delta$ with $a=2$.

In this case, the Seifert invariants of $L_F$ are:
\begin{equation*}
\left(\frac{\delta-1}{2}; -\frac{\delta}{2pq}; (2q, 2q-1), (2p, 2p-1), (2,1) \right) \ .
\end{equation*}

Let us compute now the plumbing graph $\Gamma_F$ in this case:
\begin{align*}
\frac{\alpha_1}{\alpha_1 - \beta_1} &= \frac{2q}{2q - (2q-1)} = 2q = [2q]\\
\frac{\alpha_2}{\alpha_2 - \beta_2} &= \frac{2p}{2p - (2p-1)} = 2p = [2p]\\
\frac{\alpha_3}{\alpha_3 - \beta_3} &= \frac{2}{2-1}= 2 = [2] \ . \\
\end{align*}

The Euler number at the node is given by
\begin{align*}
e &= -\frac{\delta}{2pq} + \frac{2q-1}{2q} + \frac{2p-1}{2q} + \frac{1}{2} -3 =\frac{-\delta + 2pq-p + 2pq - q + pq}{2pq} -3\\
  &=\frac{- \delta+(pq-p-q)+4pq}{2pq} -3 =\frac{4pq}{2pq}-3 = 2-3 = -1
\end{align*}

Thus, the graph $\Gamma_F$ is as in Figure~\ref{plgrex}.
\begin{figure}[H]
\begin{center}
\begin{tikzpicture}[xscale=.8,yscale=.7]
\filldraw[black, thin] 
(-50pt,0pt) node {$\Gamma_F$}
           (-30pt,0pt) node {$=$}
(0,0) circle (1.9pt) node[distance=.5pt and 2pt,below left,scale=1.3] {$-1$}
              -- ++(40:80pt) circle (1.9pt) node [above,scale=1.3] {$-2q$}
        (0,0) -- (61.28pt,0pt) circle (1.9pt) node [above,scale=1.3] {$-2p$}
        (0,0) -- (-40:80pt) circle (1.9pt) node [below=.5pt,scale=1.3] {$-2$}
;
\end{tikzpicture}
\end{center}
\caption{Plumbing graph $\Gamma_F$ in the case $a=2$ and $b=1$.}
\label{plgrex}
\end{figure}
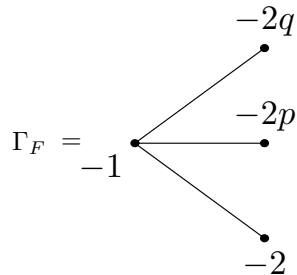

Now we compute the splice weights associated at each edge and we obtain the corresponding maximal splice diagram $\Delta_F$ (see Figure~\ref{spldiagex}).

\begin{figure}[H]
\begin{center}
\begin{tikzpicture}[xscale=.8, yscale=.75]
\filldraw[black, thin]
           (-40pt,0pt) node {$\Delta_F$}
           (-20pt,0pt) node {$=$}
           (0,0) circle (1.9pt) node[above right=8pt and -14pt,scale=1.3] {$2q$}
                                node[above right=0pt and 13pt,scale=1.3] {$2p$}
                                node[below=10pt,scale=1.3] {$2$}
      -- ++(60:90pt) circle (1.9pt) node[below left=3pt and 6pt,scale=1.3] {$2(p-1)$}
(0,0) -- ++(0:120pt) circle (1.9pt) node[above left=0pt and 2pt,scale=1.3] {$2(q-1)$}
(0,0) -- (-60:90pt)   circle (1.9pt) node[above left=0pt and 6pt,scale=1.3] {$2(\delta+pq)$}
;
\end{tikzpicture}
\end{center}
\caption{Maximal splice diagram $\Delta_F$ in the case $a=2$ and $b=1$.}
\label{spldiagex}
\end{figure}
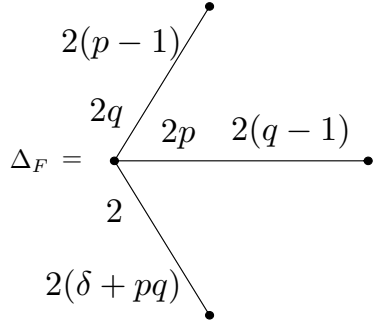

Then by the proof of Theorem~\ref{coeffint} the coefficients of $D$ are
\begin{align*}
d=  1 \quad \quad d_1= 0 \quad \quad d_2= 0 \quad \quad d_3= 0
\end{align*}
and by equation~\eqref{kend} the coefficients of $K$ are
\begin{align*}
k= -(1) -1= -2 &\quad \quad k_1= - (0)-1=-1 \\
k_2= - (0)-1=-1 &\quad \quad k_3 = - (0)-1=-1
\end{align*}

Since the canonical class $K$ has integer coefficients, there could exists a complex analytic germ $G \colon (\C^3,0) \to (\C,0)$ with isolated singularity with link $L_G$ homeomorphic to the link $L_F$.

As it is done in \cite[\S~5]{MR2922705}, in order to see if the  Milnor fibre $\mathcal{F}$ is diffeomorphic to a smoothing  (see \cite[Definition~9]{MR2922705}) of a complex surface singularity we compute the Euler characteristic of $\mathcal{F}$ and the Euler characteristic of the resolution $\tilde{V}_F$  and we apply Theorem \ref{cong}, which can be seen as a consequence of the Laufer-Steenbrink formula (see \cite{MR0450287} and \cite{MR713277}).

\begin{thm}(see for instance \cite[\S~4, Corollary~1]{MR713273})\label{cong}
Let $(V,0)$ be a normal Gorenstein complex surface singularity with link $L_V$. If $(V,0)$ is smoothable, then one has
\begin{equation*}
\chi (L_V) + K^2_V \equiv \chi(V^{\prime}) \pmod{12}
\end{equation*}
where $V^{\prime}$ is a smoothing of $V$ and $K_L$ is the canonical class of a resolution of $V$.
\end{thm}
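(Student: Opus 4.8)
The statement is a numerical corollary of Laufer's local Noether formula (the ``Laufer--Steenbrink formula'') for smoothings of normal Gorenstein surface singularities, so the plan is to recall that formula and reduce it modulo $12$. Concretely, if $(V,0)$ is a smoothable normal Gorenstein surface singularity, $\pi\colon\widetilde V\to V$ a good resolution with exceptional divisor $E$ and canonical class $K_V$, and $V'$ a smoothing, then
\begin{equation*}
\chi(V')=\chi(\widetilde V)+K_V^{2}+12\,p_g ,\qquad p_g:=\dim_{\C} R^{1}\pi_{*}\Os_{\widetilde V} .
\end{equation*}
Here $\chi(\widetilde V)=\chi(E)$ since $\widetilde V$ retracts onto $E$, and $\chi(L_V)=0$ since $L_V=\partial\widetilde V$ is a closed oriented $3$-manifold; reducing the displayed identity modulo $12$ kills the term $12\,p_g$ and yields $\chi(\widetilde V)+K_V^{2}\equiv\chi(V')\pmod{12}$, which is the asserted congruence.

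Thus the real task is to establish the Laufer--Steenbrink identity, and I would do this by the classical globalisation-plus-Noether argument. First, by a Pinkham-type argument, realise $(V,0)$ together with the chosen smoothing inside a projective family: produce a projective surface $\overline V$ whose only singularity is analytically isomorphic to $(V,0)$, sitting in a flat family $\mathcal V\to\Delta$ over a disc with $\mathcal V_0=\overline V$, smooth generic fibre $\overline V_t$, and inducing the prescribed local smoothing at $0$. Then apply Noether's formula $12\,\chi(\Os)=K^{2}+c_2$ to the smooth surface $\overline V_t$ and to a resolution $\rho\colon\widehat{\overline V}\to\overline V$ and subtract. On the holomorphic Euler characteristic side, flatness gives $\chi(\Os_{\overline V_t})=\chi(\Os_{\overline V})$, while $\chi(\Os_{\widehat{\overline V}})=\chi(\Os_{\overline V})-p_g$; so the difference of the two Noether relations reads
\begin{equation*}
12\,p_g=\bigl(K_{\overline V_t}^{2}-K_{\widehat{\overline V}}^{2}\bigr)+\bigl(c_2(\overline V_t)-c_2(\widehat{\overline V})\bigr) .
\end{equation*}

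It then remains to localise the two differences to the germ. For the topological Euler numbers this is elementary: off the singular point the resolution, the singular surface and the smoothing all coincide, so $c_2(\overline V_t)-c_2(\widehat{\overline V})=\chi(V')-\chi(\widetilde V)$. For the canonical self-intersections one uses the Gorenstein hypothesis essentially: a smoothing of a Gorenstein singularity has Gorenstein total space, so $K_{\mathcal V}$ is Cartier and restricts to the fibres with locally constant square, giving $K_{\overline V_t}^{2}=K_{\overline V}^{2}$; and $K_{\widehat{\overline V}}=\rho^{*}K_{\overline V}+Z$ with $Z$ supported on $E$ and $\rho^{*}K_{\overline V}\cdot E_i=0$, so that $K_{\widehat{\overline V}}^{2}=K_{\overline V}^{2}+Z^{2}=K_{\overline V}^{2}+K_V^{2}$, the cycle $Z$ being exactly the canonical class $K_V$ of the resolution computed earlier in the paper. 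Substituting gives $12\,p_g=-K_V^{2}+\chi(V')-\chi(\widetilde V)$, \ie the Laufer--Steenbrink identity.

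The main obstacle is precisely this localisation and, within it, the bookkeeping of the canonical classes: one must know that the prescribed analytic smoothing globalises to a projective family with Gorenstein total space and that $K^{2}$ genuinely behaves additively across the comparison ``resolution $\leftrightarrow$ smoothing'' --- this is the step where the Gorenstein assumption is indispensable. An alternative avoiding globalisation is Steenbrink's Hodge-theoretic proof: put the mixed Hodge structure on $\Coh{\ast}{V'}$, read off $p_g$ and the Hodge numbers of $V'$ in terms of the resolution, and deduce the same identity from the resulting numerical count. Since here the statement is needed only as a congruence obstruction, I would in the end simply invoke \cite{MR0450287}, \cite{MR713273} and \cite{MR713277} for whichever packaging is most convenient rather than reproduce either argument in full.
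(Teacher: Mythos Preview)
The paper does not prove this theorem: it is quoted as a citation of \cite[\S~4, Corollary~1]{MR713273}, introduced with the remark that it ``can be seen as a consequence of the Laufer--Steenbrink formula (see \cite{MR0450287} and \cite{MR713277})''. Your derivation via the Laufer--Steenbrink identity $\chi(V')=\chi(\widetilde V)+K_V^{2}+12\,p_g$ is therefore exactly the argument the paper gestures at, and your outline of that identity (globalise to a projective family, apply Noether to the smooth fibre and to a resolution, subtract, and localise the two differences using the Gorenstein hypothesis) is the standard one.

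One point you handle implicitly but that is worth naming: the statement as printed has $\chi(L_V)$, yet you (correctly) observe that $\chi(L_V)=0$ for any closed oriented $3$-manifold and then conclude with $\chi(\widetilde V)+K_V^{2}\equiv\chi(V')\pmod{12}$. This latter form is in fact what the paper \emph{uses} in the example section, where it computes $\chi(\widetilde V)+K^2$ and compares with $\chi(\mathcal F)$. So the printed $\chi(L_V)$ is a slip for $\chi(\widetilde V)$, and your argument both detects and corrects it.
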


We also have that
\begin{equation*}
\chi(\tilde{V})= \sum_{i=1}^n \chi(E_i) - \sum_{i<j} E_i \cdot E_j \ , 
\end{equation*}
where $E_i$ is one of the irreducible components of $\tilde{V}$ and
\begin{equation*}
K^2 = K^T A_F K \ .
\end{equation*}

Then, as the plumbing graph $\Gamma_F$ has 4 vertices and 3 edges, we have that
\begin{equation*}
\chi(\tilde{V})= 2(4) - 3 = 5
\end{equation*}
and
\begin{equation*}
K^2 =
\begin{pmatrix}
-2 & -1 & -1 & -1 \\
\end{pmatrix}
\begin{pmatrix}
-1 &   1 &   1 & 1  \\
1  & -2q &   0 & 0  \\
1  &   0 & -2p & 0  \\
1  &   0 &   0 & -2 \\
\end{pmatrix}
\begin{pmatrix}
       -2 \\
       -1 \\
       -1 \\
       -1 \\
      \end{pmatrix}
= -2q -2p +6 \ .
\end{equation*}

Then
\begin{equation*}
\chi(\tilde{V}) + K^2 = 5 -2q -2p +6 = 11 -2p -2q
\end{equation*}

Now, let $f \colon \C^2 \to \C$ given by $f(x,y)=\overline{xy}(x^p+y^q)$. Analogously to \cite[Example~1]{MR2115674}, the resolution graph has one node with multiplicity $pq-p-q=\delta$, then  
$\mathcal{F}_f = \bigvee_{i=1}^t \Sp{1}_i$ where $\mathcal{F}_f$ is the Milnor fibre of $f$ and $t= 1- \chi(\mathcal{F}_f)$.

Now we compute the Euler characteristic of $\mathcal{F}$ using the Join Theorem for polar weighted homogeneous polynomials and we obtain that
\begin{align*}
\chi(\mathcal{F}) &= 1 + (r-1) \left(1- \chi(\mathcal{F}_f)\right)= 1+ (2 \delta-1)(1-(-pq+p+q)) \\
                  &= 1+(2 \delta-1)(1+ \delta) = 1 + 2 \delta + 2 \delta^2 -1 - \delta \\
                  &= \delta (2 \delta + 1) \ .
\end{align*}

Using Theorem~\ref{cong} we would like to see when
\begin{equation*}
\chi(\tilde{V}) + K^2 - \chi(\mathcal{F}) = 11 -2p -2q - \left(\delta(2 \delta + 1)\right) \equiv 0 \pmod{12} \ .
\end{equation*}

Let $p \equiv 0 \pmod{12}$, then
\begin{equation*}
\chi(\tilde{V}) + K^2 - \chi(\mathcal{F}) \equiv -2q^2 -q + 11 \pmod{12}.
\end{equation*}
If we look when $q \equiv 0, \ldots 11 \pmod{12}$, we obtain that
\begin{equation*}
\chi(\tilde{V}) + K^2 - \chi(\mathcal{F}) \not\equiv 0 \pmod{12}
\end{equation*}
in any case.

Analogously, we consider $p \equiv 1, \ldots, 11 \pmod{12}$ and in each case we look when $q \equiv 0, \ldots 11 \pmod{12}$ and we obtain

\begin{prop}
Given $F$ as above, let $a=2$ and $b=1$. Then
\begin{equation*}
\chi(\tilde{V}) + K^2 - \chi(\mathcal{F}) \equiv 0 \pmod{12}
\end{equation*}
when
\begin{enumerate}[i)]
\item $p \equiv 1 \pmod{12}$ and $q \equiv 4,10 \pmod{12}$,
\item $p \equiv 2 \pmod{12}$ and $q \equiv 5 \pmod{12}$,
\item $p \equiv 4 \pmod{12}$ and $q \equiv 1 \pmod{12}$,
\item $p \equiv 5 \pmod{12}$ and $q \equiv 2,8 \pmod{12}$,
\item $p \equiv 8 \pmod{12}$ and $q \equiv 5 \pmod{12}$,
\item $p \equiv 10 \pmod{12}$ and $q \equiv 1 \pmod{12}$.
\end{enumerate}
\end{prop}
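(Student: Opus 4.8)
The plan is to reduce the statement to a finite congruence check. Recall from the computations just before the statement that, when $a=2$ and $b=1$, one has $\chi(\widetilde V)+K^2 = 11-2p-2q$ and $\chi(\F)=\delta(2\delta+1)$, and that the hypothesis $b=1$ means $\delta = pq-p-q$. Substituting this value of $\delta$ and expanding $\delta(2\delta+1)=2\delta^2+\delta$, a one-line computation gives
\[
\chi(\widetilde V)+K^2-\chi(\F) \;=\; 11-p-q-pq-2(pq-p-q)^2 \;=:\; E(p,q),
\]
a polynomial in $p$ and $q$ with integer coefficients. Hence $E(p,q)\bmod 12$ depends only on the residues of $p$ and $q$ modulo $12$, and $E$ is symmetric under $p\leftrightarrow q$, so it is enough to run through the unordered pairs of residues. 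The proposition is then equivalent to the assertion that $E\equiv 0\pmod{12}$ holds, among those residue pairs, exactly for the ones listed (together with their transposes).

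To carry out the check I would split modulo $12$ by the Chinese Remainder Theorem: $E(p,q)\equiv 0\pmod{12}$ iff $E(p,q)\equiv 0\pmod 3$ and $E(p,q)\equiv 0\pmod 4$. Modulo $3$ one has $E\equiv 2-p-q-pq+(pq-p-q)^2$, a function of $p,q\bmod 3$ (nine cases, six up to symmetry); modulo $4$, the term $2(pq-p-q)^2$ contributes $0$ when $pq-p-q$ is even, i.e.\ when $p,q$ are both even, and $2$ otherwise, so $E\bmod 4$ is a function of $p,q\bmod 4$ (sixteen cases, ten up to symmetry). Tabulating the two solution sets and recombining by CRT yields precisely the list in the statement. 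The case $p\equiv 0\pmod{12}$ worked out just before the statement — where $E\equiv 11-q-2q^2$, which one checks is never $0\bmod 12$ — is a single line of this table, and the remaining eleven residues of $p$ are treated identically.

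The only point that needs a word of care beyond bookkeeping is that imposing the standing hypothesis $\gcd(p,q)=1$ changes nothing: a residue pair modulo $12$ fails to admit a coprime representative (with $p,q\ge 2$) only when $p,q$ are both even or both divisible by $3$, and in the first case $E$ is odd (so $E\not\equiv 0\bmod 2$) while in the second $E\equiv 2\pmod 3$; thus no such pair lies in the solution set, so the arithmetic over all residues already gives the sharp answer for admissible $(p,q)$, and each listed class does contain a coprime pair (for instance $(13,4),(13,10),(2,5),(5,8)$). I expect the main obstacle to be purely organizational — enumerating the cases without omission and tracking the $p\leftrightarrow q$ symmetry so that, e.g., both $(p\equiv 1,q\equiv 4)$ and $(p\equiv 4,q\equiv 1)$ appear in the final list. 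Finally, via Theorem~\ref{cong}, the residue pairs \emph{not} on this list are exactly those for which $\F$ cannot be a smoothing of the corresponding Gorenstein singularity, which is the reason for the computation.
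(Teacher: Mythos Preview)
Your proposal is correct and follows essentially the same approach as the paper: reduce to the explicit formula $\chi(\widetilde V)+K^2-\chi(\mathcal F)=11-2p-2q-\delta(2\delta+1)$ with $\delta=pq-p-q$, observe that its residue modulo~$12$ depends only on $(p\bmod 12,\,q\bmod 12)$, and enumerate. The paper simply runs through all $12\times 12$ residue pairs directly (the case $p\equiv 0$ is written out in the text just before the proposition, and the rest are declared ``analogous''); your CRT split into mod~$3$ and mod~$4$ pieces together with the $p\leftrightarrow q$ symmetry is a tidier way to organise the same computation, and your remark that the coprimality constraint $\gcd(p,q)=1$ neither adds nor removes any residue class from the solution set is a point the paper leaves implicit.
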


Thus, in these cases we cannot use the previous congruence as an obstruction to have a complex singularity from $\C^3$ to $\C$ with Milnor fibration equivalent to the Milnor fibration of $F$. However, for all the remaining cases we have that the Milnor fibration of $F$ cannot appear as Milnor fibration of a complex singularity from $\C^3$ to $\C$.

It is our intention to study other invariants of singularities that could give us a complete answer to the question for which functions $F$, the Milnor fibration of $F$ appear as the Milnor fibration of a complex singularity.


\begin{thebibliography}{10}

\bibitem{MR2922705}
H.~Aguilar-Cabrera.
\newblock New open-book decompositions in singularity theory.
\newblock {\em Geom. Dedicata}, 158:87--108, 2012.

\bibitem{ccs:BPVdV}
W.~Barth, C.~Peters, and A.~Van~de Ven.
\newblock {\em Compact complex surfaces}, volume~4 of {\em Ergebnisse der
  Mathematik und ihrer Grenzgebiete (3) [Results in Mathematics and Related
  Areas (3)]}.
\newblock Springer-Verlag, Berlin, 1984.

\bibitem{MR0365592}
P.~T. Church and K.~Lamotke.
\newblock Non-trivial polynomial isolated singularities.
\newblock {\em Nederl. Akad. Wetensch. Proc. Ser. A \textbf{78}, Indag. Math.},
  37:149--154, 1975.

\bibitem{Cis09}
J.~L. Cisneros-Molina.
\newblock Join theorem for polar weighted homogeneous singularities.
\newblock In {\em Singularities {II}}, volume 475 of {\em Contemp. Math.},
  pages 43--59. Amer. Math. Soc., Providence, RI, 2008.

\bibitem{MR2647448}
J.~L. Cisneros-Molina, J.~Seade, and J.~Snoussi.
\newblock Milnor fibrations and {$d$}-regularity for real analytic
  singularities.
\newblock {\em Internat. J. Math.}, 21(4):419--434, 2010.

\bibitem{DAH78}
A.~H. Durfee.
\newblock The signature of smoothings of complex surface singularities.
\newblock {\em Math. Ann.}, 232:85--98, 1978.

\bibitem{EN85}
D.~Eisenbud and W.~D. Neumann.
\newblock {\em Three dimensional link theory and invariants of plane curve
  germs}.
\newblock Number 110 in Annals of Math. Studies. Princeton University Press,
  1985.

\bibitem{MR1359516}
F.~Larri{\'o}n and J.~Seade.
\newblock Complex surface singularities from the combinatorial point of view.
\newblock {\em Topology Appl.}, 66(3):251--265, 1995.

\bibitem{MR0450287}
H.~B. Laufer.
\newblock On {$\mu $} for surface singularities.
\newblock In {\em Several complex variables ({P}roc. {S}ympos. {P}ure {M}ath.,
  {V}ol. {XXX}, {P}art 1, {W}illiams {C}oll., {W}illiamstown, {M}ass., 1975)},
  pages 45--49. Amer. Math. Soc., Providence, R. I., 1977.

\bibitem{loo:k2}
E.~Looijenga.
\newblock A note on polynomial isolated singularities.
\newblock {\em Nederl. Akad. Wetensch. Proc. Ser. A \textbf{74}, Indag. Math.},
  33:418--421, 1971.

\bibitem{milnor:singular}
J.~Milnor.
\newblock {\em Singular points of complex hypersurfaces}.
\newblock Annals of Mathematics Studies, No. 61. Princeton University Press,
  Princeton, N.J., 1968.

\bibitem{Neu:calcplumb}
W.~D. Neumann.
\newblock A calculus for plumbing applied to the topology of complex surface
  singularities and degenerating complex curves.
\newblock {\em Trans. Amer. Math. Soc.}, 268(2):299--344, 1981.

\bibitem{NeuRay:plumb}
W.~D. Neumann and F.~Raymond.
\newblock Seifert manifolds, plumbing, {$\mu $}-invariant and orientation
  reversing maps.
\newblock In {\em Algebraic and geometric topology (Proc. Sympos., Univ.
  California, Santa Barbara, Calif., 1977)}, volume 664 of {\em Lecture Notes
  in Math.}, pages 163--196. Springer, Berlin, 1978.

\bibitem{MR2140991}
W.~D. Neumann and J.~Wahl.
\newblock Complete intersection singularities of splice type as universal
  abelian covers.
\newblock {\em Geom. Topol.}, 9:699--755 (electronic), 2005.

\bibitem{MR2140992}
W.~D. Neumann and J.~Wahl.
\newblock Complex surface singularities with integral homology sphere links.
\newblock {\em Geom. Topol.}, 9:757--811 (electronic), 2005.

\bibitem{Oka08}
M.~Oka.
\newblock Topology of polar weighted homogeneous hypersurfaces.
\newblock {\em Kodai Math. J.}, 31(2):163--182, 2008.

\bibitem{oka-2009}
M.~Oka.
\newblock Non-degenerate mixed functions.
\newblock {\em Kodai Math. J.}, 33(1):1--62, 2010.

\bibitem{MR2115674}
A.~Pichon.
\newblock Real analytic germs {$f\overline g$} and open-book decompositions of
  the 3-sphere.
\newblock {\em Internat. J. Math.}, 16(1):1--12, 2005.

\bibitem{PASJ03}
A.~Pichon and J.~Seade.
\newblock Real singularities and open-book decompositions of the 3-sphere.
\newblock {\em Ann. Fac. Sci. Toulouse Math. (6)}, 12(2):245--265, 2003.

\bibitem{PichSea:barfg}
A.~Pichon and J.~Seade.
\newblock Fibred multilinks and singularities $f\bar{g}$.
\newblock {\em Mathematische Annalen}, Volume 342(Number 3):487--514, 2008.

\bibitem{MR1900787}
M.~A.~S. Ruas, J.~Seade, and A.~Verjovsky.
\newblock On real singularities with a {M}ilnor fibration.
\newblock In {\em Trends in singularities}, Trends Math., pages 191--213.
  Birkh\"auser, Basel, 2002.

\bibitem{MR713273}
J.~Seade.
\newblock A cobordism invariant for surface singularities.
\newblock In {\em Singularities, {P}art 2 ({A}rcata, {C}alif., 1981)},
  volume~40 of {\em Proc. Sympos. Pure Math.}, pages 479--484. Amer. Math.
  Soc., Providence, R.I., 1983.

\bibitem{MR713277}
J.~H.~M. Steenbrink.
\newblock Mixed {H}odge structures associated with isolated singularities.
\newblock In {\em Singularities, {P}art 2 ({A}rcata, {C}alif., 1981)},
  volume~40 of {\em Proc. Sympos. Pure Math.}, pages 513--536. Amer. Math.
  Soc., Providence, RI, 1983.

\end{thebibliography}
\end{document}